\definecolor{darkgreen}{rgb}{0,0.45,0}
\definecolor{darkred}{rgb}{0.9,0,0}
\definecolor{darkblue}{rgb}{0,0,0.6}
\newtheorem{examples}[theorem]{Examples}
\newtheorem{defprop}[theorem]{Definition/Proposition}
\newcommand{\Nat}{\mathsf{Nat}}
\newcommand{\C}{\mathbf{C}}
\newcommand{\cC}{\mathcal{C}}
\newcommand{\D}{\mathbb{D}}
\newcommand{\F}{\mathcal{F}}
\newcommand{\cW}{\mathcal{W}}
\newcommand{\cF}{\mathcal{F}}
\newcommand{\Cat}{\mathbf{Cat}}
\newcommand{\Gpd}{\mathbf{Gpd}}
\renewcommand{\lim}{\varprojlim}
\newcommand{\ob}{\operatorname{ob}}
\newcommand{\mor}{\operatorname{mor}}
\newcommand{\iso}{\operatorname{iso}}
\newcommand{\Coeq}{\operatorname{Coeq}}
\newcommand{\op}{\mathrm{op}}
\newcommand{\Sets}{\mathbf{Sets}}
\newcommand{\sSets}{\mathbf{sSets}}
\newcommand{\one}{\mathbf{1}}
\newcommand{\zero}{\mathbf{0}}
\newcommand{\sPi}{\mathsf{\Pi}}
\newcommand{\sSigma}{\mathsf{\Sigma}}
\newcommand{\comp}{\textsc{comp}}
\newcommand{\cxt}{\textsf{cxt}}
\newcommand{\elim}{\textsc{elim}}
\newcommand{\Exch}{\mathsf{Exch}}
\newcommand{\form}{\textsc{form}}
\newcommand{\Id}{\mathsf{Id}}
\newcommand{\intro}{\textsc{intro}}
\newcommand{\oftype}{\! : \!}
\newcommand{\Subst}{\mathsf{Subst}}
\newcommand{\type}{\mathsf{type}}
\newcommand{\Weak}{\mathsf{Wkg}}
\newcommand{\Vble}{\mathsf{Vble}}
\newcommand{\types}{\vdash}
\newcommand{\lscott}{[\![}
\newcommand{\rscott}{]\!]}
\newcommand{\Ho}{\operatorname{Ho}}
\newcommand{\Hom}{\operatorname{Hom}}
\begin{document}

\title{Homotopy Theoretic Models of Type Theory}

\author{Peter Arndt\inst{1} \and Krzysztof Kapulkin\inst{2}}

\institute{University of Oslo, Oslo, Norway \email{peter.arndt@mathematik.uni-regensburg.de}
\and
University of Pittsburgh,
Pittsburgh, PA, USA \email{krk56@pitt.edu} }

%\tableofcontents

\maketitle

\begin{abstract}
  We introduce the notion of a logical model category which is a Quillen model category satisfying some additional conditions. Those conditions provide enough expressive power that one can soundly interpret dependent products and sums in it while also having a purely intensional interpretation of the identity types. On the other hand, those conditions are easy to check and provide a wide class of models that are examined in the paper.
\end{abstract}

\section{Introduction}

Starting with the Hofmann--Streicher groupoid model \cite{hofmann-streicher} it has become clear that there exist deep connections between Martin-L\"of Intentional Type Theory (see \cite{martin-lof:introduction, nordstrom:book}) and homotopy theory. Recently, these connections have been very intensively studied. We start by briefly summarizing this work---a more complete survey can be found in \cite{awodey:survey}.

It is well-known that {\em Identity Types} (or {\em Equality Types}) play an important role in type theory since they provide a relaxed and coarser notion of equality between terms of a given type. For example, assuming standard rules for type $\Nat$ one cannot prove that

$$n \oftype \Nat \types n\mbox{+}0 = n : \Nat$$

\noindent but there is still an inhabitant

$$n \oftype \Nat \types p : \Id_{\Nat}(n\mbox{+}0 \, ,\, n).$$

Identity types can be axiomatized in a usual way (as inductive types) by $\form$, $\intro$, $\elim$, and $\comp$ rules (see eg. \cite{nordstrom:book}). A type theory where we do not impose any further rules on the identity types is called {\em intensional}. One may be interested in adding the following {\em reflection rule}:

$$\inferrule*[right=$\Id$-refl]{\Gamma \types p : \Id_A(a,b)}{\Gamma \types a=b : A}$$

Now, the $\Id$ would not be any coarser than the usual definitional equality. However, the reflection rule destroys decidability of type-checking, an important property of type theory.

In order to interpret Martin-L\"of Type Theory in the categorical setting, one has to have some notion of `dependent products' and `dependent sums'. As it was shown by Seely \cite{seely:lccc}, locally cartesian closed categories (recall that $\C$ is a locally cartesian closed category if every slice of $\C$ is cartesian closed) provide a natural setting to interpret such operations. However, this interpretation forces the reflection rule to hold, that is if $p : \Id_A \to A \times A$ is an interpretation of $\Id$-type over $A$, it is automatically isomorphic to the diagonal map $\Delta \colon A \to A \times A$ in $\C/(A \times A)$.

For a semantics that does not force the reflection rule, one can pass to {\em Quillen model categories}. Model categories, introduced by Daniel Quillen (cf. \cite{quillen:book}) give an abstract framework for homotopy theory which has found many applications, for example in algebraic topoogy and algebraic geometry. The idea of interpreting type theory in model categories has been recently very intensively explored. In \cite{awodey-warren, warren:thesis} Awodey and Warren showed that the $\Id$-types can be purely intensionally interpreted as fibrant objects in a model category satisfying some additional conditions. Following this idea Gambino and Garner in \cite{gambino-garner} defined a weak factorization system in the classifying category of a given type theory.

Another direction is to build higher categorical structures out of type theory. An $\infty$-category has, apart from objects and morphisms, also $2$-morphisms between morphisms, $3$-morphisms between $2$-morphisms, and so on. All this data is organized with various kinds of composition. The theory of higher-dimensional categories has been successfully studied by many authors (see for example \cite{batanin:natural-environment, leinster:book, lurie:HTT}) and subsequently been brought into type theory by Garner and van den Berg \cite{benno-richard}, Lumsdaine \cite{lumsdaine:extended, lumsdaine:thesis}, and Garner \cite{garner:2-d-models}.

In this paper we make an attempt to obtain sound models of type theory with the type constructors $\sPi$ and $\sSigma$ within the model-categorical framework. In good cases that is when some additional coherence conditions (see \cite{benno-richard2}) are satisfied, our notion of a model extends the well-known models for the $\Id$-types. Following \cite{kapulkin:thesis} we propose a set of conditions on a model category that provide enough structure in order to interpret those type constructors. Such a model category will be called a {\em logical model category}. Our intention was to give conditions that on one hand will be easy to check but on the other hand, will provide a wide class of examples. It is important to stress that this paper presents only a part of the ongoing project \cite{arndt-kapulkin} devoted to study of $\sPi$- and $\sSigma$-types in homotopy-theoretic models of type theory. Further directions of this project may be found in the last section.

This paper is organized as follows: Sections \ref{background_tt} and \ref{background_model_cats} provide a background on type theory and abstract homotopy theory, respectively. In Section \ref{section_main_thm} we define the notion of a logical model category and show that such a category admits a sound model of a type theory with $\sPi$- and $\sSigma$-types. Next, within this section we give a range of examples of logical model categories. Finally, in Section \ref{section_future} we sketch the directions of our further research in this project.

\textbf{Acknowledgements.} We are very grateful to Thorsten Altenkirch, Steve Awodey, Richard Garner, Martin Hyland, Peter Lumsdaine, Markus Spitzweck, Thomas Streicher, and Marek Zawadowski for many fruitful and interesting conversations. The first-named author would like to thank the Topology group at the University of Oslo for their warm hospitality which he enjoyed during the preparation of this work and in particular John Rognes for arranging financial support via the Norwegian Topology Project RCN 185335/V30. The second-named author would like to acknowledge the support of the Department of Mathematics at the University of Pittsburgh (especially Prof. Paul Gartside) and the A\&S fellowship he was enjoying in the Spring Semester 2011 as well as the Department of Philosophy at Carnegie Mellon University and in particular his advisor, Steve Awodey. He dedicates this work to his Mother whose help and support for him when writing the paper went far beyond the obvious.

%%%%%%%%%%%%%%%%%%%%%%%%%%%%%%%%%%%%%%%%%%%%%%%%%%%%%%%%%%%%%%%%%
%\input background_TT.tex 2cf7c3351

\section{Background on Type Theory} \label{background_tt}

\subsection{Logical Framework of Martin-L\"of Type Theory}

In this section we will review some basic notions of type theory (cf. \cite{nordstrom:book}).

Martin-L\"of Type Theory is a dependent type theory i.e. apart from simple types and their terms it allows type dependency as in the judgement

$$\Gamma, x \oftype A \types B(x) \ \type.$$

In this example $B$ can be regarded as a family of types indexed over $A$.

There are some basic types as for example: $\mathsf{0}$, $\mathsf{1}$, $\Nat$ and some type-forming operations. The latter can be divided into two parts:
\begin{itemize}
 \item simple type-forming operations such as $A \to B$, $A \times B$, and $A + B$.
 \item operations on dependent types such as $\sPi_{x : A} B(x)$, $\sSigma_{x : A} B(x)$, and $\Id_A (x, \, y)$.
\end{itemize}

The language of type theory consists of {\em hypothetical judgements} (or just judgements) of the following four forms:

\begin{enumerate}
  \item $\Gamma \types A \ \type$
  \item $\Gamma \types A=B \ \type$
  \item $\Gamma \types a : A$
  \item $\Gamma \types a=b : A$
\end{enumerate}

There are two more forms, derivable from the ones given above:
\begin{enumerate}
  \item[5.] $\Gamma \types \Delta \ \cxt$
  \item[6.] $\Gamma \types \Delta = \Phi \ \cxt$
\end{enumerate}

Judgements of the form 5. establish that $\Delta$ is a well-formed context (in the context $\Gamma$). $\Delta$ is said to be a well-formed context if $\Delta$ is a (possibly empty) sequence of the form $(x_0 \oftype A_0, x_1 \oftype A_1 (x), \ldots, x_n \oftype A_n (x_0, \ldots, x_{n-1}))$ and

$$\Gamma \types A_0 \ \type$$

and for $i = 1, 2, \ldots n$:

$$\Gamma, x_0 \oftype A_0, x_1 \oftype A_1, \ldots, x_{i-1} \oftype A_{i-1} \types A_i (x_0, x_1, \ldots, x_{i-1})\ \type.$$

The deduction rules of Martin-L\"of Type Theory can be divided into two parts:
\begin{itemize}
 \item {\em structural} rules.
 \item rules governing the forms of types.
\end{itemize}

The structural rules are standard and may be found in the Appendix \ref{appendix_strux_rules}. The rules governing the forms of types are presented in the next section.

\subsection{Type Constructors $\sPi$ and $\sSigma$}

Given a new basic type or type former in Martin-L\"of Type Theory we need to specify:

\begin{itemize}
 \item a {\em formation} rule, providing the conditions under which we can form a certain type.
 \item {\em introduction} rules, giving the canonical elements of a type. The set of introduction rules can be empty.
 \item an {\em elimination} rule, explaining how the terms of a type can be used in derivations.
 \item {\em computation} rules, reassuring that the introduction and elimination rules are compatible in some suitable sense. Each of the computation rules corresponds to some introduction rule.
\end{itemize}

In this paper we will be interested only in two dependent type constructors: $\sPi$ and $\sSigma$ and so we will restrict our presentation to them. We should recall that under Curry-Howard isomorphism (see for example \cite{urzyczyn:book}) they correspond to the universal and existential quantification respectively (i.e. $\forall$ and $\exists$).

\paragraph{$\sPi$-types.} The version presented below may be different from some other presentations. As the elimination rule we take a weak version that is sometimes called $\sPi$-application rule.

\fbox{
\begin{minipage}[t]{0.9\textwidth}
$$\inferrule*[right=$\sPi$-\form]{\Gamma,\ x \oftype A \types B(x)\ \type}{\Gamma \types \sPi_{x : A} B(x)\ \type}$$

$$\inferrule*[right=$\sPi$-\intro]{\Gamma,\ x \oftype A \types B(x)\ \type \\ \Gamma,\ x \oftype A \types b(x) : B(x)}{\Gamma \types \lambda x \oftype A.b(x) : \sPi_{x : A} B(x)}$$

$$\inferrule*[right=$\sPi$-\elim]{\Gamma \types f \oftype \sPi_{x : A} B(x) \\ \Gamma \types a : A}{\Gamma \types \textsf{app} (f, a) : B(a)}$$

$$\inferrule*[right=$\sPi$-\comp]{\Gamma,\ x \oftype A \types B(x)\ \type \\ \Gamma,\ x \oftype A \types b(x) : B(x) \\ \Gamma \types a : A}{\Gamma \types \textsf{app}(\lambda x \oftype A .b(x), a)=b(a) : B(a)}$$

\end{minipage}}

\paragraph{$\sSigma$-types.} We use an axiomatization of the $\sSigma$-types as an inductive type from the Calculus of Inductive Construction.

\fbox{
\begin{minipage}[t]{0.9\textwidth}
$$\inferrule*[right=$\sSigma$-\form]{\Gamma \types A\ \type \\ \Gamma,\ x \oftype A \types B(x)\ \type}{\Gamma \types \sSigma_{x : A} B(x)\ \type}$$

$$\inferrule*[right=$\sSigma$-\intro]{\Gamma \types A\ \type \\ \Gamma,\ x \oftype A \types B(x)\ \type}{\Gamma,\ x \oftype A,\ y \oftype B(x) \types \textsf{pair} (x, y) : \sSigma_{x : A} B(x)}$$

$$\inferrule*[right=$\sSigma$-\elim]{\Gamma,\ z \oftype \sSigma_{x : A} B(x) \types C(z)\ \type \\ \Gamma,\ x \oftype A,\ y \oftype B(x) \types d(x,y) : C(\textsf{pair}(x, y))}{\Gamma,\ z \oftype \sSigma_{x : A} B(x) \types \textsf{split}_d (z) : C(z)}$$

$$\inferrule*[right=$\sSigma$-\comp]{\Gamma,\ z \oftype \sSigma_{x : A} B(x) \types C(z)\ \type \\ \Gamma,\ x \oftype A,\ y \oftype B(x) \types d(x,y) : C(\textsf{pair}(x, y))}{\Gamma,\ x \oftype A,\ y :\oftype B(x) \types \textsf{split}_d (\textsf{pair} (x,y))=d(x,y) : C(\textsf{pair} (x, y))}$$
\end{minipage}}

%%%%%%%%%%%%%%%%%%%%%%%%%%%%%%%%%%%%%%%%%%%%%%%%%%%%%%%%%%%%%%%%%%%%
%\input background_hom_theory.tex

\section{Background on Model Categories} \label{background_model_cats}

In this section we will gather some notions and results from model category theory.

\begin{definition}
 \textnormal{Let $\C$ be a category. We say that $f \colon A \to B$ has the {\em left lifting property} with respect to $g \colon C \to D$ or equivalently that $g$ has the {\em right lifting property} with respect to $f$ (we write $f \pitchfork g$) if every commutative square $g \circ u = v \circ f$ as below has a {\em diagonal filler} i.e. a map $j \colon B \to C$ making the diagram
  $$\xymatrix@C=2.5em{
        A \ar[r]^{u} \ar[d]_{f} & C \ar[d]^{g} \\
        B \ar[r]_{v} \ar@{..>}[ru]|-{j} &  D
        }$$
  commute (i.e. $jf = u$ and $gj = v$).}
\end{definition}

For a collection $\mathcal{M}$ of morphisms in $\C$ we denote by ${}^{\pitchfork}\mathcal{M}$ (resp. $\mathcal{M}^{\pitchfork}$) the collection of maps having the left (resp. right) lifting property with respect to all maps in $\mathcal{M}$. %Similarly $\mathcal{M}^{\pitchfork}$ denotes the collection of maps having the right lifting property with respect to all maps in $\mathcal{M}$.

\begin{definition}\label{def:wfs}
 \textnormal{A {\em weak factorization system} $(\mathcal{L}, \mathcal{R})$ on a category $\C$ consists of two collections of morphisms $\mathcal{L}$ ({\em left maps}) and $\mathcal{R}$ ({\em right maps}) %closed under composition --- follows from item 2 below
in the category $\C$ such that:
 \begin{enumerate}
  \item\label{def:wfs-1} Every map $f \colon A \to B$ admits a factorization % $f=p \circ i$
         $$\xymatrix@C=2.5em{
        A \ar[rr]^{f} \ar[rd]_{i} & & B \\
        & C \ar[ru]_{p} &
        }$$
 where $i \in \mathcal{L}$ and $p \in \mathcal{R}$.
  \item\label{def:wfs-2} $\mathcal{L}^{\pitchfork} = \mathcal{R}$ and $\mathcal{L}= {}^{\pitchfork}\mathcal{R}$.
 \end{enumerate}}
\end{definition}

\begin{examples}\label{ex:wfs} \textnormal{The following are examples of weak factorization systems:
 \begin{enumerate}
  \item There is a weak factorization system in the category $\Sets$ with: $\mathcal{L}:=$ monomor-phisms and $\mathcal{R}:= \textrm{epimorphisms}$. Note that the factorization and filling are not unique.
  \item\label{ex:wfs-gpd} There is also a weak factorization system in the category $\Gpd$ of groupoids with: $\mathcal{L}:= \textrm{injective equivalences } \textrm{ and }\; \mathcal{R}:= \textrm{fibrations %(see \ref{def:fibration})  ... doesn't find reference
}$. Recall that a functor is called {\em injective equivalence} if it is an equivalence of categories which is injective on objects. We factor a functor $F \colon X \to Y$ in $\Gpd$ as
        $$\xymatrix@C=2.5em{
        X \ar[r] \ar[rd] & \{(x,\ y,\ f) |\ x \in \ob (X),\ y\in \ob (Y),\ f \colon Fx \to y \}\ar[d] \\
         & Y
        }$$
 \end{enumerate}}
\end{examples}

We now turn towards model categories. All results and notions given without reference can be found in \cite{hovey:book}.

\begin{definition}\label{DefModelCat}
 \textnormal{A {\em model category} is a finitely complete and cocomplete category $\C$ equipped with three subcategories: $\F$ ({\em fibrations}), $\cC$ ({\em cofibrations}), and $\cW$ ({\em weak equivalences}) satisfying the following two conditions:
 \begin{enumerate}
  \item ({\em Two-of-three}) Given a commutative triangle $$\xymatrix@C=2.5em{
        A \ar[rr]^{f} \ar[rd]_{h} & & B \ar[ld]^{g}\\
        & C &
        }$$ if any two of $f$, $g$, $h$ belong to $\cW$, then so does the third.
  \item Both $(\cC, \F \cap \cW)$ and $(\cC \cap \cW, \F)$ are weak factorization systems.
 \end{enumerate}}
\end{definition}

We will refer to model categories sometimes by the tuple $(\C,\cW,\cC,\F)$ or, if no ambiguity arises, just by the underlying category $\C$. We should also mention that some authors add an additional axiom that the classes $\cW$, $\cC$, and $\F$ are closed under retracts. This in fact is redundant as it follows from Definition \ref{DefModelCat} as explained in \cite{riehl:wfs}.

From a model category $\C$ one has a functor into its associated {\em homotopy category} $\Ho(\C)$, which is the initial functor mapping the weak equivalences to isomorphisms (which defines $\Ho(\C)$ up to equivalence). %\begin{definition}
 A morphism which is both a fibration and a weak equivalence is called an {\em trivial fibration}. Similarly, a morphism which is both a cofibration and a weak equivalence is called a {\em trivial cofibration}.
%\end{definition}

\begin{examples} \textnormal{The following are examples of model categories:
 \begin{enumerate}
\item On any complete and cocomplete category $\C$ one has the {\em discrete model structure} with $\cC:=\F:=\mor \, \C$ and $\cW:=\iso \, \C$. This is the only model structure with $\cW=\iso \, \C$.
%  \item On any category $\C$ giving a model structure with $\cW=\mor\,\C$ (an {\em indiscrete model structure}) is equivalent to just giving a factorization system, since the classes of fibrations and trivial fibrations (resp. cofibrations and trivial cofibrations) coincide.
%  \item On any Grothendieck topos one has two indiscrete model structures: One in which cofibrations are the monomorphisms and fibrations given by the right lifting property and one in which cofibrations are the complemented monomorphisms and fibrations are the split epimorphisms. % better to put this in later?
%  \item There are two model structures on the category $\Top$ of topological spaces. In the first we set: $$\F = \textrm{{\em Serre fibrations} and } \cW = \textrm{weak equivalences},$$ and define $\cC = \; ^{\pitchfork}(\F \cap \cW)$ . In the second we have: $$\F = \textrm{{\em Hurewicz fibrations}, } \cC = \textrm{cofibrations, and } \cW = \textrm{weak equivalences}.$$
  \item The category $\Gpd$ of groupoids has a structure of a model category with:
  $\F := \textrm{fibrations}$, $\cC := \textrm{functors injective on objects}$ and $\cW := \textrm{categorical equivalences}$
  \item The category $\sSets:=\Sets^{\Delta^{\op}}$ of simplicial sets (where $\Delta$ is the category of finite non-empty linearly ordered sets) has a standard model structure with $\cW:=\{\textrm{those morphisms inducing isomorphisms on all homotopy groups}\}$, \newline $\cC:=\{\textrm{monomorphisms}\}$ and $\F:=(\cW \cap \cC)^\pitchfork$
%  \item For a small category $\mathbf{D}$ the category $\sSets^\mathbf{D}$ of simplicial presheaves on $\mathbf{D}$ bears a {\em global injective model structure} in which cofibrations are the monomorphisms, weak equivalences are the morphisms which are objectwise weak equivalences of simplicial sets (or, equivalently, induce isomorphisms on all homotopy group presheaves) and fibrations are defined as $(\cC \cap \cW)^\pitchfork$.
%  \item For a small site $(\mathbf{D}, \tau)$ there is a {\em local injective model structure} on simplicial presheaves on $\mathbf{D}$ with cofibrations the monomorphisms, weak equivalences those morphisms which induce isomorphisms on all homotopy group {\em sheaves}, fibrations again are defined via the lifting property. With the same definitions applied to the category of sheaves one gets the {\em injective model structure} on simplicial sheaves.
 \end{enumerate}}
\end{examples}

\begin{definition}
 \textnormal{An object $A$ is called {\em fibrant} if the canonical map $A \to \one$ is a fibration. Similarly, an object is called {\em cofibrant} if $\zero \to A$ is a cofibration.} %\textnormal{ A {\em fibrant replacement} of an object $A$ is a weak equivalence from $A$ to a fibrant object, likewise a {\em cofibrant replacement} is a weak equivalence from a cofibrant object to $A$. The codomain (resp. domain) of these weak equivalences are also referred to as (co)fibrant replacement $A$.}
\end{definition}

% \begin{remark}
% Note that, via the factorization systems, (co)fibrant replacements always exist. Often the factorization can be carried out functorially, in this case one also gets (co)fibrant replacement functors.
% \end{remark}

% \begin{definition}\label{der:path-object}
%  Let $A$ be an object in a model category $\C$. A ({\em very good}) {\em path object} $A^I$ for $A$ consists of a factorization
%  $$\xymatrix@C=2.5em{
%         A \ar[rr]^{r} \ar[rd]_{\Delta} & & A^I \ar[ld]^{p} \\
%         & A \times A &
%         }$$
%  of the diagonal map $\Delta \colon A \to A \times A$ as an trivial cofibration $r$ followed by a fibration $p$.
% \end{definition}
%
% \begin{remark}
%  Note that a path object of an object $A$ can be defined in any category with a weak factorization system $(\mathcal{L}, \mathcal{R})$ where we think of $\mathcal{L}$ as the class of trivial cofibrations and about $\mathcal{R}$ as the class of fibrations.
% \end{remark}

\begin{defprop}\label{Quillen-adjunction}
The following are equivalent for a pair of adjoint functors $L \colon \C \leftrightarrows \D : R$ between model categories:
\begin{enumerate}
 \item $L$ preserves cofibrations and trivial cofibrations.
 \item $R$ preserves fibrations and trivial fibrations.
\end{enumerate}
\textnormal{An adjoint pair satisfying these conditions is called a {\em Quillen adjunction} and it induces an adjunction $\Ho(\C) \leftrightarrows \Ho(\D)$ between the homotopy categories. It is called a {\em Quillen equivalence} if this induced adjunction is an equivalence of categories.}
\end{defprop}

{\bf Notation:} Assume $\C$ is a finitely complete category and $f \colon B \to A$ is a morphism in $\C$. The functor taking an object in the slice over $A$ to its pullback along $f$ will be denoted by $f^* \colon \C/A \to \C/B$. This functor has a left adjoint denoted by $\Sigma_f$ which takes an object in the slice over $B$ and composes it with $f$. If $f^*$ also has a right adjoint, it will be denoted by $\Pi_f$.

A model category interacts with its slice categories in the following way:

\begin{proposition}\label{SliceModelCats}
 Let $\C$ be a model category and $C$ an object in $\C$. Define that a morphism $f$ in $\C/C$ is a fibration/cofibration/weak equivalence if it is a fibration/cofibration/weak equivalence in $\C$. Then $\C/C$ is a model category with the model structure described above. Furthermore for every morphism $f$  the adjunction $\Sigma_f \dashv f^* $ is a Quillen adjunction.
\end{proposition}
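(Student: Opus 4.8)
The plan is to verify the three conditions of Definition~\ref{DefModelCat} for $\C/C$ directly, transporting everything from $\C$ along the forgetful functor $U \colon \C/C \to \C$. First I would recall that $\C/C$ is finitely complete and cocomplete: limits in a slice are computed as limits in $\C$ of the associated diagrams (with the terminal object of $\C/C$ being $\mathrm{id}_C$), and colimits in $\C/C$ are created by $U$ since $U$ has a right adjoint (namely $X \mapsto (X \times C \to C)$); alternatively one gives the explicit construction. Next, the two-of-three property for $\cW$ in $\C/C$ is immediate: a commutative triangle in $\C/C$ maps under $U$ to a commutative triangle in $\C$ with the same underlying maps, and membership in $\cW$ is tested in $\C$, so the property is inherited verbatim.

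The substantive point is that $(\cC, \F\cap\cW)$ and $(\cC\cap\cW, \F)$ are weak factorization systems on $\C/C$. For the factorization axiom (Definition~\ref{def:wfs}.\ref{def:wfs-1}), given $f$ in $\C/C$, factor $Uf$ in $\C$ as $Uf = p\circ i$ with $i$ a (trivial) cofibration and $p$ a (trivial) fibration; the intermediate object acquires a canonical map to $C$ (by post-composing $p$ with the structure map of the codomain), and then $i, p$ become morphisms in $\C/C$ giving the required factorization, since the classes in $\C/C$ are defined by their underlying maps. For the lifting axiom (Definition~\ref{def:wfs}.\ref{def:wfs-2}), the key observation is that a commutative square in $\C/C$ is, after applying $U$, just a commutative square in $\C$ between the same underlying maps, and conversely any diagonal filler in $\C$ for such a square automatically commutes with the structure maps to $C$ (since it equals $u$ after precomposition, wait—one checks: the filler $j$ satisfies $g j = v$, and the structure map of the domain of $g$ composed with $g$ is the structure map of its codomain, so the triangle over $C$ commutes). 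Hence $f \pitchfork g$ holds in $\C/C$ if and only if $Uf \pitchfork Ug$ holds in $\C$, from which $\cC^\pitchfork = \F\cap\cW$, ${}^\pitchfork(\F\cap\cW) = \cC$, and the trivial-cofibration/fibration analogues all follow from the corresponding statements in $\C$.

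Finally, for the Quillen adjunction claim: $\Sigma_f$ sends a map $g$ over $B$ to the same underlying map viewed over $A$, so it preserves cofibrations and trivial cofibrations on the nose, since these are detected on underlying maps in $\C$; by Definition/Proposition~\ref{Quillen-adjunction} this is equivalent to $f^*$ preserving fibrations and trivial fibrations, but one can also see that directly, as pullbacks of (trivial) fibrations are (trivial) fibrations in any model category and the pullback in $\C/B$ along $f$ is computed as a pullback in $\C$. I do not expect any genuine obstacle here; the only thing to be careful about is the bookkeeping that the structure maps to $C$ (or $B$) are respected at each step, which in every case is forced rather than an extra condition to impose.
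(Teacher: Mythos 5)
The paper does not prove Proposition~\ref{SliceModelCats}: it appears in the background section under the blanket remark that results stated without reference are taken from Hovey's book. So your proposal is being judged on its own, and it is the standard argument: completeness/cocompleteness of the slice, two-of-three inherited via the forgetful functor $U$, factorizations lifted from $\C$ by composing with the structure map, liftings handled by the observation that a filler in $\C$ for a square lying in $\C/C$ automatically lives over $C$ (your check of this is right in substance, though you state the identity backwards: it is the structure map of the \emph{codomain} of $g$ composed with $g$ that equals the structure map of its domain), and the Quillen adjunction claim from the fact that $\Sigma_f$ is the identity on underlying maps.

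The one place you are too quick is the biconditional ``$f \pitchfork g$ in $\C/C$ iff $Uf \pitchfork Ug$ in $\C$'' and the ensuing claim that the four class identities of Definition~\ref{def:wfs}.\ref{def:wfs-2} ``follow from the corresponding statements in $\C$.'' Only the implication $Uf \pitchfork Ug \Rightarrow f \pitchfork g$ is immediate, and it yields only the easy inclusions $\cC \subseteq {}^{\pitchfork}(\F\cap\cW)$ and $\F\cap\cW \subseteq \cC^{\pitchfork}$ in $\C/C$. For the reverse inclusions you must do more. For $\cC^{\pitchfork} \subseteq \F\cap\cW$ one can promote an arbitrary square in $\C$ against $Ug$ to a square in $\C/C$ by equipping the domain and codomain of the left-hand cofibration with the structure maps obtained by composing through the square (this works because the codomain of the square already maps to $C$). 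But for ${}^{\pitchfork}(\F\cap\cW) \subseteq \cC$ no such promotion is available --- an arbitrary trivial fibration in $\C$ need not admit any compatible augmentation to $C$ --- and the standard fix is the retract argument: factor $f$ in $\C/C$ as a cofibration followed by a trivial fibration, use the assumed lifting property to exhibit $f$ as a retract of the cofibration, and invoke closure of $\cC$ under retracts in $\C$ (which, as the paper notes after Definition~\ref{DefModelCat}, follows from the axioms). This is a routine but genuinely necessary step that your write-up omits; everything else, including the Quillen adjunction argument via Definition/Proposition~\ref{Quillen-adjunction}, is correct.
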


\begin{defprop}\label{Rezk}
 The following are equivalent \cite[Prop. 2.7]{Rezk}:
\begin{enumerate}
 \item Pullbacks of weak equivalences along fibrations are weak equivalences
 \item For every weak equivalence $f \colon X \to Y$ the induced Quillen adjunction $\C/X \leftrightarrows \C/Y$ is a Quillen equivalence
\end{enumerate}
\textnormal{If a model category satisfies these conditions it is called {\em right proper}. From the second formulation one can deduce that right properness only depends on the class of weak equivalences, not the whole model structure (see also \cite[Cor. 1.5.21]{CisinskiAsterisque}). There also is the dual notion of {\em left properness} (pushouts of weak equivalences along cofibrations are weak equivalences).}
\end{defprop}

\begin{proposition}[cf. {\cite[Rem 2.8]{Rezk}}] \label{RezkPropernessCriteria}
 The following are true for any model category $\C$
 \begin{enumerate}
  \item If all objects in $\C$ are fibrant, then $\C$ is right proper.
  \item If $\C$ is right proper, then so are its slice categories $\C/X$.
 \end{enumerate}
\end{proposition}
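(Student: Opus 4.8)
The plan is to check, in each case, one of the two equivalent conditions listed in Definition/Proposition~\ref{Rezk}. I would dispose of statement~2 first, since it is essentially formal once one recalls that a slice of a slice is again a slice. Fix an object $Z$ of a right proper model category $\C$. By Definition/Proposition~\ref{Rezk} it is enough to show that for every weak equivalence $f\colon (A,a)\to (B,b)$ in $\C/Z$ the induced Quillen adjunction $(\C/Z)/(A,a)\leftrightarrows(\C/Z)/(B,b)$ is a Quillen equivalence. But there are canonical isomorphisms $(\C/Z)/(A,a)\cong\C/A$ and $(\C/Z)/(B,b)\cong\C/B$, compatible with the model structures of Proposition~\ref{SliceModelCats}, under which this adjunction becomes the one induced by the underlying morphism $f\colon A\to B$ of $\C$. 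As $f$ is a weak equivalence in $\C$ and $\C$ is right proper, this is a Quillen equivalence, again by Definition/Proposition~\ref{Rezk}.

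For statement~1 I would verify instead the first condition of Definition/Proposition~\ref{Rezk}: given a pullback square in $\C$ with right-hand edge a fibration $p\colon E\to B$ and lower edge a weak equivalence $w\colon A\to B$, I must show the opposite edge $A\times_B E\to E$ is a weak equivalence; note that under the standing hypothesis the objects $A$, $B$, $E$ and $A\times_B E$ are all fibrant. The most conceptual route is via homotopy pullbacks: because $p$ is a fibration between fibrant objects and $A$ is fibrant, the strict pullback $A\times_B E$ computes the homotopy pullback of the cospan $E\xrightarrow{p}B\xleftarrow{w}A$. Comparing this cospan with $E\xrightarrow{p}B\xleftarrow{\mathrm{id}_B}B$ via the objectwise weak equivalence $(\mathrm{id}_E,\mathrm{id}_B,w)$, and using that homotopy pullbacks are invariant under objectwise weak equivalences of cospans, one gets that the induced map on homotopy pullbacks --- which is exactly $A\times_B E\to E$ --- is a weak equivalence.

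One can also argue in a more hands-on way that stays within the language of weak factorization systems. Factor $w$ as a trivial cofibration $i\colon A\to\bar A$ followed by a fibration $q\colon\bar A\to B$; by the two-of-three axiom $q$ is a trivial fibration. Pasting pullback squares, the map $A\times_B E\to E$ factors as $A\times_B E\to\bar A\times_B E\to E$, in which the second map is a pullback of the trivial fibration $q$ and is therefore again a trivial fibration (the right class of a weak factorization system being closed under pullback), while the first is a pullback of the trivial cofibration $i$ along the fibration $\bar A\times_B E\to\bar A$. By two-of-three it suffices to prove that a pullback of a trivial cofibration with fibrant domain, taken along a fibration between fibrant objects, is a weak equivalence. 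This last point is the one genuinely non-formal ingredient, and the step I expect to require the most care: such a trivial cofibration is a strong deformation retract inclusion --- a retraction is obtained by lifting $\mathrm{id}$ against $A\to\one$, and the deformation by a further lifting against a path object of the codomain --- but the retraction does not itself pull back along the fibration, so one instead lifts the deformation homotopy along it, using fibrancy of the codomain to keep the path objects well behaved. Both the homotopy-pullback fact used above and this lifting argument are standard; in particular, the statement we are proving is \cite[Rem.~2.8]{Rezk}.
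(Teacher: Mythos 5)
The paper does not actually prove this proposition---it is quoted from \cite[Rem 2.8]{Rezk} without argument---so there is no in-paper proof to compare against; judged on its own, your proposal is correct in outline. For statement~2, your route through the second condition of Definition/Proposition~\ref{Rezk} and the isomorphism $(\C/Z)/(A,a)\cong\C/A$ works, but it is worth noting that the first condition gives a one-line proof: by Proposition~\ref{SliceModelCats} the fibrations and weak equivalences of $\C/Z$ are created by the forgetful functor to $\C$, and that functor also preserves pullbacks, so a pullback of a weak equivalence along a fibration in $\C/Z$ is literally such a pullback in $\C$. For statement~1, your reduction is the standard one (due to Reedy): factor $w$ as a trivial cofibration $i$ followed by a necessarily trivial fibration $q$, observe that the pullback of $q$ along $p$ is a trivial fibration because trivial fibrations form the right class of a weak factorization system, and reduce by two-of-three to showing that a trivial cofibration between fibrant objects pulls back along a fibration to a weak equivalence. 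That last lemma is indeed the only non-formal ingredient, and your sketch of it is right but incomplete as written: after exhibiting $i$ as the inclusion of a strong deformation retract via the two liftings you describe, one must still use the homotopy lifting property of the fibration $\bar A\times_B E\to\bar A$ to lift the deformation $H\colon\bar A\to P(\bar A)$ to a fibrewise deformation of $\bar A\times_B E$ onto $A\times_B E$, and then check that this forces the inclusion of the pullback to be a weak equivalence (e.g.\ because it becomes an isomorphism in the homotopy category, all objects involved being fibrant and cofibrancy being irrelevant for this). These details are standard but are exactly where the fibrancy hypothesis is consumed, so a complete write-up should include them. Your first, homotopy-pullback argument is also valid, but it outsources the same content to the invariance of homotopy pullbacks, whose proof contains essentially the same lemma.
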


 A further important property of model categories is the following:

\begin{definition}
 \textnormal{Let $\lambda$ be a regular cardinal and $(\C,\cW,\cC,\F)$ a model category. It is called $\lambda${\em -combinatorial} if the underlying category is locally $\lambda$-presentable and there are sets $I$ (resp. $J$) of morphisms between $\lambda$-presentable (resp. $\lambda$-presentable and cofibrant) objects such that $I^\pitchfork=\F \cap \cW$ and $J^\pitchfork = \F$. A model category is {\em combinatorial} if it is $\lambda$-combinatorial for some $\lambda$.}
\end{definition}

\begin{examples}\label{CombinatorialCatExamples}
\begin{enumerate}
 \item \textnormal{The category of sets with the discrete model structure is seen to be combinatorial taking $I:=\{\{0,1\}\rightarrow \{0\}, \emptyset \rightarrow \{0\}\}$ and $J:=\emptyset$.}

 \item \textnormal{The category $\sSets$ with the standard model structure is combinatorial. Being a topos it is locally presentable, with the representable functors playing the role of generators of the category. Since the initial object is the constant functor with value the empty set and cofibrations are the monomorphisms, every object is cofibrant. One can take $I:=\{\partial\Delta^n \rightarrow \Delta^n\}$ (the inclusions of the borders into full $n$-simplices) and $J:=\{ \Lambda^n_k \rightarrow \Delta^n \}$ (the inclusions of the $k$-th $n$-horns into $n$-simplices).}

% \item A prime example is the category of simplicial presheaves on a site $\mathbf{D}$, with {\em global injective model structure}. Clearly this is locally presentable; the representable functors are $\lambda$-presentable objects for some $\lambda > \#(\mathbf{D})$ and generate the category. Since the initial object is the constant sheaf with value the empty set and cofibrations are the monomorphisms every object is cofibrant. One can take $$I:=\{i \times 1 | \ \partial \Delta^n \times \Hom(-,\ X) \to \Delta^n \times \Hom(-,\ X) \}$$ where $i$ denotes the inclusion of the border into the full $n$-simplex, both understood as constant functors on $\mathbf{D}$, and $$J:=\{j \times 1 | \ \Lambda^n_k \times \Hom(-, \ X) \to \Delta^n \times \Hom(-, \ X) \}$$ with $j$ the inclusion of the $k$-th $n$-horn into the $n$-simplex.
\end{enumerate}
\end{examples}

Combinatorial model categories serve as input for two constructions whose output are further combinatorial model categories:

\begin{theorem}[cf. {\cite[Sect~A.2.8]{lurie:HTT}}] \label{functorcat}
 Let $(\C,\cW,\cC,\F)$ be a combinatorial model category and $\D$ a small category. If on the functor category $\C^\D$ one defines the following classes of morphisms,

\begin{center}
 $\cC_{inj}:=\{\text{morphisms which are objectwise in } \cC\}$

$\cW_{\C^\D}:=\{\text{morphisms which are objectwise in }\cW\}$

$\F_{proj}:=\{\text{morphisms which are objectwise in }\cF\}$

\end{center}
then one has:
\begin{itemize}
 \item $(\C^\D, \cW_{\C^\D}, \cC_{inj}, (\cW_{\C^\D} \cap \cC_{inj})^\pitchfork)$ and $(\C^\D, \cW_{\C^\D}, {}^\pitchfork(\cW_{\C^\D} \cap \F_{proj}), \F_{proj})$ are combinatorial model category structures on $\C^\D$, called the injective and the projective structure, respectively.
 \item If $(\C,\cW,\cC,\F)$ is left or right proper, then so are the above model structures.
\end{itemize}
\end{theorem}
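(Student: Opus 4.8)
The plan is to establish the two model structures by different machinery and then to deduce properness uniformly. The projective structure I would obtain by the standard transfer (cofibrantly generated recognition) argument along the evaluation/free-diagram adjunctions; the injective one requires J.\ Smith's recognition theorem for combinatorial model categories, since its fibrations are not defined objectwise.

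For the projective structure, write $F_d\dashv\mathrm{ev}_d$ for the adjunction between $\C$ and $\C^\D$ with $\mathrm{ev}_d$ evaluation at $d\in\ob\D$ and $F_d(c)(e)=\coprod_{\Hom_\D(d,e)}c$, and take as generating cofibrations, resp.\ generating trivial cofibrations, the sets $I_{proj}=\{F_d i : d\in\ob\D,\ i\in I\}$ and $J_{proj}=\{F_d j : d\in\ob\D,\ j\in J\}$, where $(I,J)$ cofibrantly generate $\C$. Using the adjunctions one computes $I_{proj}^{\pitchfork}=\F_{proj}\cap\cW_{\C^\D}$ and $J_{proj}^{\pitchfork}=\F_{proj}$, so in the recognition theorem every hypothesis except one is immediate from the corresponding objectwise fact in $\C$; the one point with content is that every relative $J_{proj}$-cell complex is an objectwise weak equivalence. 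For that I would evaluate at an arbitrary $e\in\ob\D$: as colimits in $\C^\D$ are computed objectwise, $\mathrm{ev}_e$ carries such a complex to a transfinite composite of pushouts of the maps $\coprod_{\Hom_\D(d,e)}j$, each a coproduct of trivial cofibrations of $\C$, hence itself a trivial cofibration; since trivial cofibrations are closed under coproducts, pushouts and transfinite composition, the image is a trivial cofibration of $\C$, in particular a weak equivalence. Combinatoriality is then free: $\C^\D$ is locally presentable, for $\lambda$ with $\C$ $\lambda$-combinatorial $\mathrm{ev}_d$ preserves $\lambda$-filtered colimits so $F_d$ preserves $\lambda$-presentable objects, and $F_d$ is left Quillen (as $\mathrm{ev}_d$ preserves objectwise fibrations and objectwise trivial fibrations) so it sends the cofibrant domains of $J$ to cofibrant objects.

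For the injective structure no transfer is available, so I would invoke Smith's theorem (see \cite{lurie:HTT}): on a locally presentable category, given a class $\cW$ that satisfies two-of-three, is closed under retracts, and is accessible and accessibly embedded in the arrow category, together with a \emph{set} $S$ such that $S^{\pitchfork}\subseteq\cW$ and ${}^{\pitchfork}(S^{\pitchfork})\cap\cW$ is closed under pushout and transfinite composition, there is a combinatorial model structure whose weak equivalences are $\cW$ and whose cofibrations are ${}^{\pitchfork}(S^{\pitchfork})$. Here I would take $\cW:=\cW_{\C^\D}$ and arrange that ${}^{\pitchfork}(S^{\pitchfork})=\cC_{inj}$. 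Two-of-three, retract-closure, the inclusion $S^{\pitchfork}\subseteq\cW_{\C^\D}$, and the closure of the objectwise trivial cofibrations all reduce pointwise to the corresponding statements in $\C$. What genuinely uses that $\C$ is \emph{combinatorial} rather than merely cofibrantly generated is (a) that $\cW_\C$ is accessible and accessibly embedded in $\C^{\to}$, so that the ``objectwise'' class $\cW_{\C^\D}$ is too, and (b) the existence of a \emph{set} $S=I_{inj}$ with ${}^{\pitchfork}(I_{inj}^{\pitchfork})=\cC_{inj}$, which follows from combinatoriality of $\C$ via the accessible form of the small object argument (Smith's lemma on accessible subcategories of locally presentable categories). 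The fibrations of the resulting structure are, by construction, exactly the maps with the right lifting property against the objectwise trivial cofibrations.

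For properness, the key elementary observation is that in \emph{both} structures every cofibration is objectwise a cofibration and every fibration objectwise a fibration. In the injective case this is the definition of the cofibrations; for the projective case a cofibration is a retract of an $I_{proj}$-cell complex and $\mathrm{ev}_e(F_d i)=\coprod_{\Hom_\D(d,e)}i$ is a coproduct of cofibrations of $\C$, so projective cofibrations are objectwise cofibrations. Hence the projective trivial cofibrations are contained in the injective ones, and an injective fibration, having the right lifting property against the latter, has it against $J_{proj}$, so it is an objectwise fibration. Now if $\C$ is right proper, a pullback square in $\C^\D$ with a fibration (in either structure) along one side and a weak equivalence along the opposite side evaluates at each $d$ to a pullback square in $\C$ of a fibration and a weak equivalence --- limits being computed objectwise --- so by right properness of $\C$ the remaining side is objectwise a weak equivalence; the left-proper case is formally dual, with pushouts, cofibrations and colimits. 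I expect the main obstacle to be precisely the input to Smith's theorem for the injective structure: accessibility of the objectwise weak equivalences in the arrow category, and the existence of a small generating set for the objectwise cofibrations. Both are formal consequences of combinatoriality, but neither can be sidestepped by an explicit description, whereas the projective structure and the properness clause are routine once the cofibrantly generated recognition theorem is in hand.
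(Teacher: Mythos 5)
Your argument is correct and is essentially the standard proof of this result as given in the reference the paper cites (Lurie, HTT, Sect.~A.2.8): transfer along the adjunctions $F_d\dashv\mathrm{ev}_d$ for the projective structure, Smith's recognition theorem plus the accessibility of the objectwise weak equivalences and the existence of a generating set for the objectwise cofibrations (Lurie's Lemma~A.2.8.3) for the injective structure, and the observation that cofibrations and fibrations in both structures are objectwise for properness. The paper itself gives no proof, quoting the theorem as background, so there is nothing further to compare.
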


\begin{examples}\label{functorcatExamples}
 \textnormal{\begin{enumerate}
 \item The construction applied to the discrete model structure on the category of sets yields the discrete model structure on presheaves.
 \item For a small category $\D$ the injective model structure on simplicial presheaves is an example for the above construction applied to the combinatorial left and right proper model category of simplicial sets, yielding the so-called {\em global injective model structure} on $\sSets^\D$.
 \end{enumerate}}
\end{examples}

Next we summarize some results on ({\em left Bousfield}) {\em localizations}. This is a technique to replace a given model structure on a category by another one, enlarging the class of weak equivalences, keeping the class of cofibrations and adjusting the class of fibrations accordingly. The applicability of this technique is only ensured when the model category is either {\em cellular} (for this see \cite{hirschhorn:book}) or combinatorial.

In the following theorem we will use the {\em mapping space} $\mathbb{R}\Hom(X,Y) \in \Ho(\sSets)$ which one can associate to any two objects $X,Y$ of a model category as in \cite[Sect.~5.4]{hovey:book}---for simplicial model categories this can be taken to be the simplicial hom-set of morphisms between a cofibrant replacement of $X$ and a fibrant replacement of $Y$.

\begin{theorem}[J. Smith, proven in \cite{Barwick}]
Let $(\C,\cW,\cC, \F)$ be a left proper combinatorial model category and $H$ a set of morphisms of $\Ho(\C)$. Define an object $X \in \mathcal{M}$ to be {\em $H$-local} if any morphism $f \colon A \to B$ in $H$ induces an isomorphism $f^* \colon \mathbb{R}\Hom(B,X)\to \mathbb{R}\Hom(A,X)$ in $\Ho(\sSets)$. Define $\cW_H$, the class of $H$-equivalences, to be the class of morphisms \\ \noindent $f \colon A \to B$ which induce isomorphisms  $f^*\colon \mathbb{R}\Hom(B,X)\to \mathbb{R}\Hom(A,X)$ in $\Ho(\sSets)$ for all $H$-local objects $X$. Then $(\C, \cW_H, \cC, \F_H:=(\cW_H \cap \cC)^\pitchfork)$ is a left proper combinatorial model structure.
\end{theorem}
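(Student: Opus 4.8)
The plan is to derive the theorem from J.~Smith's recognition criterion for combinatorial model categories (this is how the result is proved in \cite{Barwick}; see also \cite[Sect.~A.2.6]{lurie:HTT}). That criterion states: if $\C$ is locally presentable, if a class $\cW_0$ of morphisms of $\C$ is closed under retracts, satisfies two-out-of-three, and is an \emph{accessible} subcategory of the arrow category of $\C$, and if $I_0$ is a \emph{set} of morphisms with $I_0^\pitchfork \subseteq \cW_0$ such that the class ${}^\pitchfork(I_0^\pitchfork) \cap \cW_0$ is closed under pushouts and transfinite composition, then $\C$ admits a combinatorial model structure whose cofibrations are ${}^\pitchfork(I_0^\pitchfork)$, whose weak equivalences are $\cW_0$, and whose fibrations are $({}^\pitchfork(I_0^\pitchfork) \cap \cW_0)^\pitchfork$. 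I would apply this with $\cW_0 := \cW_H$ and with $I_0 := I$ a set of generating cofibrations for the given combinatorial structure on $\C$, so that ${}^\pitchfork(I^\pitchfork) = \cC$; the output is then exactly the structure $(\C, \cW_H, \cC, \F_H)$ with $\F_H = (\cW_H \cap \cC)^\pitchfork$ asserted in the statement.

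The formal hypotheses are quickly checked. A morphism of $\sSets$ is an isomorphism in $\Ho(\sSets)$ exactly when it is a weak equivalence, and isomorphisms in any homotopy category are closed under retracts and satisfy two-out-of-three; since membership in $\cW_H$ is the conjunction, over all $H$-local objects $X$, of conditions of the form ``$f^*$ is an isomorphism in $\Ho(\sSets)$'', the class $\cW_H$ inherits both properties. Moreover, the basic properties of derived mapping spaces \cite[Sect.~5.4]{hovey:book} give that every weak equivalence $f \colon A \to B$ of $\C$ induces an isomorphism $f^* \colon \mathbb{R}\Hom(B,X) \to \mathbb{R}\Hom(A,X)$ for \emph{every} object $X$; hence $\cW \subseteq \cW_H$, and in particular $I^\pitchfork = \F \cap \cW \subseteq \cW_H$.

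The two closure properties, and left properness of the localized structure, all rest on two standard facts: (a) for any object $X$ the functor $X' \mapsto \mathbb{R}\Hom(X',X)$ carries homotopy pushout squares in $\C$ to homotopy pullback squares in $\sSets$; and (b) in a \emph{left proper} model category the pushout of a cofibration along an arbitrary map is a homotopy pushout square, and more generally transfinite composites of cofibrations are homotopy colimits. So, given $i \colon A \into B$ in $\cC \cap \cW_H$ and a pushout $j \colon C \to D$ of $i$ along some map $A \to C$, I would apply $\mathbb{R}\Hom(-,X)$ for an $H$-local $X$: by (a) and (b) the resulting square is a homotopy pullback in which $\mathbb{R}\Hom(B,X) \to \mathbb{R}\Hom(A,X)$ is an isomorphism (since $i \in \cW_H$), and therefore $j^* \colon \mathbb{R}\Hom(D,X) \to \mathbb{R}\Hom(C,X)$ is an isomorphism, i.e. $j \in \cW_H$; and $j \in \cC$ holds automatically. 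The transfinite-composition case is identical, using the part of (b) about transfinite composites together with the fact that a homotopy limit of a tower of weak equivalences of simplicial sets maps by a weak equivalence to each stage. The same computation with $i$ replaced by an arbitrary $H$-equivalence shows that the localized structure is again left proper, the original left properness still making the pushout square a homotopy pushout.

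The genuine obstacle is the hypothesis that $\cW_H$ is an \emph{accessible} subcategory of the arrow category of $\C$; this is precisely where combinatoriality (hence local presentability) of $\C$ is indispensable, and it is what licenses the appeal to Smith's criterion. I would establish it in two steps. First, the full subcategory of $H$-local objects is accessible and accessibly embedded in $\C$: being $H$-local is the conjunction, indexed by the \emph{set} $H$, of the conditions ``$f^*$ is an isomorphism in $\Ho(\sSets)$'', each of which cuts out an accessible, accessibly embedded subcategory, and a small intersection of such subcategories is again accessible. Second, one invokes the result from the theory of accessible categories that the class of morphisms of a locally presentable category which are inverted by the derived hom into every member of a fixed accessible class of objects is itself an accessible subcategory of the arrow category (see \cite[Sect.~A.2.6]{lurie:HTT} and \cite{Barwick}). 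Granting this, Smith's criterion applies and produces the asserted model structure, left properness having been checked above.
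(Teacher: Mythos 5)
The paper does not prove this statement: it is quoted as background, with the proof explicitly deferred to the cited reference of Barwick (and, implicitly, to Lurie's appendix), so there is no in-paper argument to compare yours against. Your reconstruction is the standard one and is correct in outline: reduce to Smith's recognition criterion with $\cW_0 = \cW_H$ and $I_0 = I$ the generating cofibrations (so that ${}^\pitchfork(I^\pitchfork)=\cC$ and $I^\pitchfork = \F\cap\cW \subseteq \cW_H$), verify two-out-of-three and retract closure of $\cW_H$ from functoriality of $\mathbb{R}\Hom(-,X)$, and obtain closure of $\cC\cap\cW_H$ under pushout and transfinite composition --- as well as left properness of the localized structure --- from the fact that in a left proper model category such (co)limits are homotopy colimits, which $\mathbb{R}\Hom(-,X)$ converts to homotopy limits in $\sSets$. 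The one place where your argument is not self-contained is exactly the one place where the real technical content of the theorem lives: the accessibility of $\cW_H$ as a full subcategory of the arrow category. Your two-step reduction (accessibility of the $H$-local objects, then accessibility of the class of morphisms inverted by derived homs into an accessible class) is the right shape, but the second step is itself a nontrivial theorem requiring the machinery of good filtrations of colimits in locally presentable categories, and you invoke it as a black box from the same sources the paper cites. That is acceptable for a background theorem of this kind, but you should be aware that you have sketched the architecture of the proof rather than supplied its load-bearing component.
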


\begin{remark}
\begin{enumerate}
 \item The fibrant objects in the localized model structure are exactly the $H$-local objects which are fibrant in the original model structure.
 \item A shorter proof of the above theorem for the special case of simplicial sheaves can be found in \cite[Sect.~2.2]{MorelVoevodsky}.
\end{enumerate}
\end{remark}

\begin{example}\label{LocalizationExamples}
% \begin{enumerate}
% \item One can obtain the usual model structure on simplicial sets by starting out with the ``discrete'' model structure having cofibrations the monomorphisms and weak equivalences the isomorphisms and localizing by the one-element class $\{\Delta^1 \to \Delta^0\}$

% \item The $\mathbb{A}^1$-homotopy category from motivic homotopy theory is constructed by localizing the standard local model structure on the category of simplicial sheaves on the site $\mathbf{Sm}/k$ of smooth schemes over $k$ with Nisnevich topology with the class $$\{ ! \times 1: \mathbb{A}^1 \times \Hom(-, \ X) \to \Hom(-, \ X) | X \in \mathbf{Sm}/k \}$$

% \item
 One can localize the discrete model structure on presheaves on a site taking $H$ to be the set of morphisms of the following form: For each cover $\{A_i\rightarrow X | i \in I\}$ in the given Grothendieck topology take the canonical morphism $\Coeq (\coprod_{I \times I}\Hom(-,A_i \times_X A_j) \rightrightarrows \coprod_I \Hom(-,A_i)) \rightarrow X.$ This yields a non-discrete model structure whose homotopy category is equivalent to the category of sheaves. Being $H$-local means in this case satisfying the descent condition for the covers given from the Grothendieck topology in question.
%\end{enumerate}
\end{example}

We still record a property of Bousfield localizations of simplicial sheaf categories which will be of interest:

\begin{theorem}[see {\cite[Thm 2.2.7]{MorelVoevodsky}}] \label{MVrightProperness}
 The Bousfield localization with respect to a set $A$ of morphisms of a category of simplicial sheaves with local model structure is right proper if there exists a set $\tilde{A}$ of monomorphisms such that

\begin{enumerate}
 \item Every arrow from $A$ is isomorphic to one from $\tilde{A}$ in the homotopy category
 \item Given an object $X$, a morphism $f:Y \to Z \in \tilde{A}$ and a fibration $p: E \to X \times Z$, the projection from the pullback $E \times _{X \times Z} X \times Y$ is a local weak equivalence.
\end{enumerate}

\end{theorem}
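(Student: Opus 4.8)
The plan is to verify right properness in the form recorded in \ref{Rezk}: that every pullback of an $A$-local weak equivalence along an $A$-local fibration is an $A$-local weak equivalence. I write $\cW_A,\F_A$ for the classes of the localized structure and reserve the unqualified words ``weak equivalence'', ``fibration'', ``trivial cofibration'', etc., for the original local model structure on simplicial sheaves, which is known to be proper. Two bookkeeping remarks come first. Since left Bousfield localization does not alter the cofibrations, $\cC\cap\cW\subseteq\cC\cap\cW_A$, whence $\F_A=(\cC\cap\cW_A)^\pitchfork\subseteq(\cC\cap\cW)^\pitchfork=\F$; and since $(\cC,\F\cap\cW)$ and $(\cC,\F_A\cap\cW_A)$ are weak factorization systems with the same left class, $\F_A\cap\cW_A=\cC^\pitchfork=\F\cap\cW$. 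Thus every $A$-local fibration is a fibration and every $A$-local trivial fibration is a trivial fibration, and moreover $\cW\subseteq\cW_A$, so local weak equivalences are $A$-local. Secondly, condition~(1) makes the localizations at $A$ and at $\tilde A$ coincide, so I may assume $A=\tilde A$ consists of monomorphisms.

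First I would reduce to trivial cofibrations. Given an $A$-local weak equivalence $f\colon X\to Z$ and an $A$-local (hence ordinary) fibration $p\colon E\to Z$, factor $f$ in the localized structure as an $A$-local trivial cofibration $j$ followed by an $A$-local fibration $q$; by two-out-of-three $q\in\F_A\cap\cW_A=\F\cap\cW$ is a trivial fibration. Pulling $f$ back along $p$ then exhibits $p^{*}f$ as a pullback of $j$ along a fibration, composed with $p^{*}q$, which is a pullback of the weak equivalence $q$ along the fibration $p$, hence a weak equivalence by properness of the local structure. So it suffices to establish the assertion $(\star)$ that the pullback of any $A$-local trivial cofibration along a fibration is an $A$-local weak equivalence.

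To prove $(\star)$ I would consider the class $\mathcal G$ of monomorphisms whose pullback along every fibration is an $A$-local weak equivalence, and show (i) that $\mathcal G$ is saturated, i.e.\ closed under cobase change, transfinite composition and retracts, and (ii) that $\mathcal G$ contains a generating set of $A$-local trivial cofibrations. For (i) the crucial input is that simplicial sheaves form a Grothendieck topos, so colimits are universal: the pullback along a fibration of a cell complex built from maps of $\mathcal G$ is again such a cell complex, whose successive stages are $A$-local weak equivalences and, being pullbacks of monomorphisms, monomorphisms --- hence $A$-local trivial cofibrations, a class closed under these operations; retracts are handled similarly. For (ii) I would invoke the standard description of the generators of a left Bousfield localization (the localization machinery of \cite{hirschhorn:book}, worked out for simplicial sheaves in \cite{MorelVoevodsky}): a generating set of $A$-local trivial cofibrations may be taken to be the generating \emph{local} trivial cofibrations, which lie in $\mathcal G$ at once by properness of the local structure, together with the pushout--products $f\,\square\,i$ of the maps $f\colon Y\to Z$ of $\tilde A$ with the generating cofibrations $i\colon K\to L$ (the inclusions $\partial\Delta^{n}\hookrightarrow\Delta^{n}$ of constant simplicial sheaves).

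The decisive step --- and the one for which condition~(2) is tailor-made --- is to show $f\,\square\,i\in\mathcal G$. Fix a fibration $p\colon E\to Z\times L$ and write $P=(Y\times L)\cup_{Y\times K}(Z\times K)$ for the domain of $f\,\square\,i$. By universality of colimits, pulling $P$ back along $p$ gives the pushout of the pullbacks of $Y\times L$, $Y\times K$ and $Z\times K$. Condition~(2), applied with the object $X:=L$ to the fibration $p\colon E\to L\times Z$, says the pullback of $Y\times L$ maps to $E$ by a weak equivalence; condition~(2), applied with $X:=K$ to the fibration over $K\times Z$ obtained by restricting $p$ along $Z\times K\hookrightarrow Z\times L$, says the pullback of $Y\times K$ maps to the pullback of $Z\times K$ by a weak equivalence which is moreover a monomorphism, hence a trivial cofibration. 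Therefore, in the pushout square, the leg out of the pullback of $Y\times L$ is a cobase change of a trivial cofibration, so an $A$-local trivial cofibration; composing with the weak equivalence into $E$ and using two-out-of-three shows that the pullback of $f\,\square\,i$ along $p$ is an $A$-local weak equivalence. Combining (i) and (ii) gives $(\star)$, and \ref{Rezk} then yields right properness of the localized structure. I expect the main obstacle to be the precise identification of a generating set of $A$-local trivial cofibrations in the \emph{sheaf} (rather than presheaf) setting, where the Grothendieck topology contributes additional local generators; the remaining steps are a fairly mechanical interplay of universality of colimits, two-out-of-three and condition~(2), though verifying (i) also requires some care.
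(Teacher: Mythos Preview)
The paper does not supply its own proof of this theorem: it is stated with the citation ``see \cite[Thm 2.2.7]{MorelVoevodsky}'' and no proof environment follows. So there is nothing in the paper to compare your argument against; what you have written is a reconstruction of the Morel--Voevodsky argument itself.

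That said, your outline is the right one and matches the strategy of the cited source: reduce to $A$-local trivial cofibrations via the factorization and $\F_A\cap\cW_A=\F\cap\cW$, then run a saturation argument on the class $\mathcal G$ of monomorphisms stable (up to $A$-local equivalence) under pullback along fibrations, using universality of colimits in the topos to propagate closure under cell attachment, and finally check the generators. Your use of condition~(2) with $X=L$ and $X=K$ to handle the pushout--product generators $f\,\square\,i$ is exactly the intended mechanism. The one point where you correctly flag uncertainty --- identifying a generating set of $A$-local trivial cofibrations in the sheaf setting --- is indeed the only place requiring real input from \cite{MorelVoevodsky} (or Hirschhorn): one needs that the localized trivial cofibrations are the saturation of the original generating trivial cofibrations together with the horns on $\tilde A$, which holds here because simplicial sheaves form a simplicial model category with the evident tensoring and every object is cofibrant. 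With that granted, your argument goes through.
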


Last, we gather some basic facts about {\em Cisinski model structures}, a class of model structures on toposes under which many of our examples fall and which has a big overlap, but does not coincide with, the model structures which can be constructed in the ways sketched above. In the following we will use the terminology of small and large sets, small meaning to be contained in a Grothendieck universe.

\begin{definition}
Let $\C$ be a topos. A set $\cW$ of morphisms of $\C$ is called a {\em localizer} if
\begin{enumerate}
 \item $\cW$ has the two-of-three property (see \ref{DefModelCat}.1)
 \item $\cW$ contains $(Mono\, \C)^\pitchfork$
 \item $\cW$ is closed under pushouts and transfinite compositions (i.e. for a chain of morphisms in $\cW$ the canonical morphism from the domain of the first one to the colimit of the chain is again in $\cW$)
\end{enumerate}

For any set of morphisms $S$ there is a smallest localizer $\cW(S)$ containing it, namely the intersection of all localizers containing $S$. A localizer is called {\em accessible} if it is generated by a small set.
\end{definition}

\begin{theorem}[see {\cite[Thm~3.9]{CisinskiTopoi}}]
 For any accessible localizer $\cW$ in a topos $\C$, the tuple $(\C, \cW, Mono, (\cW \cap Mono)^\pitchfork)$ is a model structure.
\end{theorem}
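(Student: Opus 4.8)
The plan is to verify the two conditions of Definition~\ref{DefModelCat}. A (Grothendieck) topos is complete, cocomplete and locally presentable, so the underlying category $\C$ is finitely complete and cocomplete; and the two-of-three property for $\cW$ is already part of the definition of a localizer. Hence everything reduces to producing the two weak factorization systems $(Mono, \F \cap \cW)$ and $(Mono \cap \cW, \F)$, where $\F := (\cW \cap Mono)^\pitchfork$.

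First I would invoke the classical fact that $(Mono, Mono^\pitchfork)$ is a weak factorization system on any Grothendieck topos: the class $Mono$ is cofibrantly generated by a small set $I$ of monomorphisms, so the small object argument---available since $\C$ is locally presentable---supplies the factorizations and the lifts.

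Next I would identify the trivial fibrations by showing $\F \cap \cW = Mono^\pitchfork$. One inclusion is immediate: by the second localizer axiom $Mono^\pitchfork \subseteq \cW$, and since $Mono \cap \cW \subseteq Mono$ we have $Mono^\pitchfork \subseteq (Mono \cap \cW)^\pitchfork = \F$, so $Mono^\pitchfork \subseteq \F \cap \cW$. For the reverse inclusion I would run the usual retract argument: given $p \in \F \cap \cW$, factor $p = q i$ with $i \in Mono$ and $q \in Mono^\pitchfork$ using the first weak factorization system; since $q \in Mono^\pitchfork \subseteq \cW$ and $p \in \cW$, two-of-three forces $i \in Mono \cap \cW$; a lift in the square with $i$ on the left, $p$ on the right, the identity along the top and $q$ along the bottom then displays $p$ as a retract of $q$, and $Mono^\pitchfork$ is closed under retracts, so $p \in Mono^\pitchfork$. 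Thus $(Mono, \F \cap \cW) = (Mono, Mono^\pitchfork)$ is a weak factorization system.

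The remaining---and hardest---point is that $(Mono \cap \cW, \F)$ is a weak factorization system. Since $\F = (Mono \cap \cW)^\pitchfork$, by the small object argument it suffices to exhibit a \emph{small} set $J$ of morphisms whose saturation (closure under pushouts, transfinite composites and retracts) is exactly $Mono \cap \cW$; this is where accessibility of $\cW$ is used. Writing $\cW = \cW(S)$ for a small set $S$, one first arranges that $S$ consists of monomorphisms---factoring a map as a monomorphism followed by a map in $Mono^\pitchfork \subseteq \cW$ leaves $\cW(S)$ unchanged, by two-of-three---and then enlarges $S$, using a cylinder functor on $\C$ (which a topos admits) and the pushout-product operation, to a still-small set $J$ of monomorphisms. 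One then proves that every trivial cofibration lies in the saturation of $J$, while conversely every $J$-cofibration is a trivial cofibration; the first statement is the substantive one, and it is obtained by an inductive analysis of how the localizer $\cW(S)$ is built up, using the exactness of toposes (in particular universality of colimits, so that the constructions involved are stable under pullback). This is the content of Cisinski's argument in \cite[Thm~3.9]{CisinskiTopoi}, and it is the main obstacle. Granting it, the small object argument produces $(Mono \cap \cW, \F)$, which together with the two-of-three property and the first weak factorization system establishes all the axioms of Definition~\ref{DefModelCat}.
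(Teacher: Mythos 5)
Your outline is correct and follows essentially the same route as the source the paper cites for this theorem (the paper itself gives no proof, only the reference to Cisinski): two-of-three and bicompleteness are immediate, $(Mono, Mono^\pitchfork)$ is cofibrantly generated, the retract argument identifies $\F \cap \cW$ with $Mono^\pitchfork$, and the real content is the construction of a small generating set of trivial cofibrations, which is exactly where accessibility of the localizer enters. You have correctly isolated that last step as the substantive one and attributed it to Cisinski's inductive analysis, which is where the paper's citation points as well.
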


A model structure arising in the above way is called a {\em Cisinski model structure}. Since the cofibrations are the monomorphisms and every morphism with domain an initial object is a monomorphism, every object in a Cisinski model category is cofibrant. Hence any such model structure is left proper (\ref{RezkPropernessCriteria}.1). Right properness is adressed in the following statements:

\begin{theorem}[cf. {\cite[Thm. 4.8]{CisinskiTopoi}}]\label{CisinskiGeneralProperness}
 Let $\C$ be a topos and $S$ a small set of morphisms. Then $(\C, \cW(S), Mono, (\cW \cap Mono)^\pitchfork )$ is right proper if and only if for every $f:X \rightarrow Y\, \in S$ and every fibration $p:E\rightarrow B$ with fibrant domain $E$ and every morphism $g:Y \rightarrow B$, the morphism $X\times_B E \rightarrow Y \times_B E$ (the pullback of $f$ along $p$) is in $\cW(S)$.
\end{theorem}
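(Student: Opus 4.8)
\emph{Plan.} The statement is an equivalence, and the two directions are of very different difficulty.

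\textbf{Forward direction (right proper $\Rightarrow$ the criterion).} This one is immediate and does not even use the fibrancy of $E$. Given $f\colon X\to Y$ in $S$, a fibration $p\colon E\to B$ and a morphism $g\colon Y\to B$, observe that the projection $\pi\colon Y\times_B E\to Y$ is the pullback of the fibration $p$ along $g$, hence a fibration; and since the structure map $X\to B$ factors through $f$ we have $X\times_B E\cong X\times_Y(Y\times_B E)$, so that $X\times_B E\to Y\times_B E$ is exactly the pullback of $f$ along $\pi$. As $f\in S\subseteq\cW(S)$ is a weak equivalence, Definition/Proposition~\ref{Rezk} yields that this pullback lies in $\cW(S)$.

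\textbf{Reverse direction (the criterion $\Rightarrow$ right proper).} This is the substance of the theorem, and here is how I would organize it. By Definition/Proposition~\ref{Rezk} it suffices to show that for every fibration $p\colon E\to B$ the pullback functor $p^{*}\colon\C/B\to\C/E$ preserves weak equivalences. The topos hypothesis enters decisively here: each slice $\C/B$ is again a topos, so $p^{*}$ has a right adjoint $\Pi_{p}$ and therefore preserves all colimits, and $p^{*}$ also preserves monomorphisms, i.e.\ cofibrations. Since every object of a Cisinski model structure is cofibrant, Ken Brown's lemma reduces the task to proving that $p^{*}$ sends trivial cofibrations to weak equivalences, for every fibration $p$; passing to the slice model structure (Proposition~\ref{SliceModelCats}) this amounts to checking $p^{*}$ on a generating set of trivial cofibrations of $\C/B$. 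The heart of the argument is then to use the description of the trivial cofibrations of a Cisinski model structure as $\cW(S)$-anodyne extensions: they are generated, under pushouts, transfinite compositions and retracts, by an explicit small set $\Lambda_{S}$ whose members are pushout-products ($\widehat{\otimes}$) of the generating monomorphisms of $\C$ with the cylinder data and with monomorphism models of the elements of $S$. Because $p^{*}$ preserves colimits and monomorphisms, it is enough to show $p^{*}(\lambda)\in\cW(S)$ for every $\lambda\in\Lambda_{S}$ and every way of promoting $\lambda$ to a morphism over $B$. Unwinding the pushout-product, such a $p^{*}(\lambda)$ is again a pushout-product of a monomorphism with the pullback along $p$ of an element of $S$ (respectively of a cylinder map); the cylinder part is controlled by the good behaviour of cylinders over a fibrant object, and for the $S$-part one invokes the hypothesis --- and it is precisely here that the fibrancy of the domain $E$ is needed, to guarantee that the pullback of the relevant element of $S$ lies in $\cW(S)$. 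A pushout-product of a cofibration with such a (trivial cofibration-like) map is again a weak equivalence by left properness of the structure (all objects being cofibrant, cf.\ Proposition~\ref{RezkPropernessCriteria}), and assembling these facts shows that $p^{*}$ carries all trivial cofibrations, and hence (Ken Brown once more) all weak equivalences, to weak equivalences. As an aside, the passage from the generators $S$ to the whole class $\cW(S)$, where convenient, can be packaged by verifying that the class of morphisms whose pullbacks along fibrations with fibrant domain lie in $\cW(S)$ enjoys the closure properties that cut out a localizer.

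\textbf{Main obstacle.} The conceptual reductions (Rezk's criterion, Ken Brown, cocontinuity of $p^{*}$) are routine; the real work is in the middle step, namely the translation of the stated fibrewise condition on $S$ into the statement that $p^{*}$ kills the generating anodyne extensions of the Cisinski structure along an \emph{arbitrary} fibration --- the localizer-style manipulations only directly control fibrations with fibrant domain. Carrying this out rigorously requires Cisinski's combinatorial apparatus of anodyne extensions, and it is exactly there that both halves of the hypothesis --- the topos structure (a convenient generating set of monomorphisms, and the exactness making $p^{*}$ cocontinuous) and the fibrancy of $E$ --- are genuinely used; doing that bookkeeping carefully is the delicate point of the proof.
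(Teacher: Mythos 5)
First, a caveat: the paper does not prove this statement at all --- it is quoted from Cisinski's work (Thm.~4.8 of the cited paper) as background, so there is no in-paper argument to compare yours with and your proposal must stand on its own. Your forward direction does stand: identifying $X\times_B E \to Y\times_B E$ as the pullback of $f\in S\subseteq\cW(S)$ along the fibration $Y\times_B E\to Y$ (itself the pullback of $p$ along $g$) and invoking Definition/Proposition~\ref{Rezk} is complete and correct, and you are right that the fibrancy of $E$ plays no role there.

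The reverse direction, however, has a genuine gap, in two places. First, your reduction rests on the claim that the trivial cofibrations of a Cisinski model structure are generated, under pushout, transfinite composition and retract, by the explicit set $\Lambda_S$ of anodyne extensions. This is not true in general: Cisinski's anodyne extensions are trivial cofibrations, but they need not exhaust them --- they only detect fibrations with fibrant codomain (the Joyal model structure on simplicial sets is a standard example where no such explicit generating set of trivial cofibrations is available). Consequently, verifying $p^{*}(\lambda)\in\cW(S)$ for $\lambda\in\Lambda_S$ does not, via Ken Brown's lemma, yield that $p^{*}$ preserves all weak equivalences. Second, and more decisively, the hypothesis of the theorem only controls pullbacks along fibrations $p\colon E\to B$ with $E$ fibrant, whereas Rezk's criterion requires control along arbitrary fibrations; you explicitly defer this passage as ``the delicate point'' without carrying it out. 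That passage, together with the correct handling of anodyne versus trivial cofibrations, is the entire mathematical content of the theorem, so as written your text is a plan for the hard direction rather than a proof of it. To repair it you would need either Cisinski's actual argument or at least a proved reduction lemma to the effect that right properness of a Cisinski structure can be tested on fibrations with fibrant domain --- and that lemma is precisely what is nontrivial here.
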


\begin{proposition}[cf. {\cite[Prop.~3.12 and Cor.~4.11]{CisinskiTopoi}}] \label{CisinskiSpecialProperness}
 Let $\C$ be a topos and $(X_i|i \in I)$ a small family of objects of $\C$. Then the localizer generated by the projections $\{Z \times X_i \rightarrow Z | Z \in \ob(\C)\}$ is accessible and the corresponding model structure is right proper.
\end{proposition}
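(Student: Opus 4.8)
The statement packages two assertions: that the localizer $\cW$ generated by the large family of projections $\{Z\times X_i\to Z\mid Z\in\ob\C\}$ is accessible, and that the Cisinski model structure it determines is right proper. The plan is to shrink the generating family to a small one, which settles accessibility, and then to run Cisinski's right-properness criterion (Theorem~\ref{CisinskiGeneralProperness}) against that small set.

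Since $\C$ is a topos it is locally presentable, hence has a small dense subcategory $\mathcal G$; in particular every object $Z$ is canonically the colimit of the diagram $\mathcal G/Z\to\C$, $(G\to Z)\mapsto G$. Put $S:=\{\,G\times X_i\to G\mid G\in\ob\mathcal G,\ i\in I\,\}$, a small set of morphisms. The crux of the argument is the claim that \emph{for every object $Z$ of $\C$ and every $i\in I$ the projection $Z\times X_i\to Z$ already lies in $\cW(S)$}. Granting it, $\cW(S)$ contains the whole large family, hence coincides with the localizer $\cW$ that family generates; so $\cW=\cW(S)$ is generated by a small set, i.e.\ accessible, and by \cite[Thm~3.9]{CisinskiTopoi} one gets the model structure $(\C,\cW,\mathrm{Mono},(\cW\cap\mathrm{Mono})^\pitchfork)$.

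To prove the claim I would fix $i$ and consider the class $\mathcal Z_i$ of objects $Z$ with $Z\times X_i\to Z\in\cW(S)$. By construction $\ob\mathcal G\subseteq\mathcal Z_i$, and since $\C$ is cartesian closed the functor $-\times X_i$ has a right adjoint $(-)^{X_i}$ and therefore preserves colimits; so for a small diagram $(Z_k)$ with colimit $Z$ the projection $Z\times X_i\to Z$ is the colimit, in the arrow category of $\C$, of the projections $Z_k\times X_i\to Z_k$. Closure of $\mathcal Z_i$ under small coproducts is formal (coproducts of morphisms of a localizer again lie in it), so everything hinges on bridging from coproducts of generators to arbitrary colimits. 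This is the one genuinely non-formal point, and where I expect the main difficulty: a general colimit is not assembled merely from coproducts and pushouts along monomorphisms, so one either runs a gluing/descent argument on the presentation of a colimit as a coequalizer of coproducts, or — more robustly — invokes the description of the $\cW(S)$-equivalences in terms of $S$-local objects, under which an object turns out to be $S$-local exactly when it is ``$X_i$-local'' for every $i$, so that each projection $Z\times X_i\to Z$ manifestly induces isomorphisms on derived mapping spaces into $S$-local objects. This claim is, in substance, \cite[Prop.~3.12]{CisinskiTopoi}.

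Finally, for right properness I would verify the criterion of Theorem~\ref{CisinskiGeneralProperness} for the small set $S$. Take a member $f\colon G\times X_i\to G$ of $S$, a fibration $p\colon E\to B$ with fibrant domain $E$, and any morphism $g\colon G\to B$. Writing $E_G:=G\times_B E$ for the pullback of $p$ along $g$ (so $E_G\to G$ is again a fibration), a routine manipulation of pullbacks yields a canonical isomorphism $(G\times X_i)\times_B E\cong E_G\times X_i$ identifying the map ``pullback of $f$ along $p$'' of Theorem~\ref{CisinskiGeneralProperness} with the projection $E_G\times X_i\to E_G$. By the claim this projection lies in $\cW(S)=\cW$, so the criterion holds and the model structure is right proper. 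Apart from the claim every step here — passing to a dense subcategory, the pullback identification, and the appeal to Theorem~\ref{CisinskiGeneralProperness} — is routine bookkeeping.
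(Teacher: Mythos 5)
First, a point of reference: the paper does not actually prove this proposition --- it is imported from \cite[Prop.~3.12 and Cor.~4.11]{CisinskiTopoi} with no argument given --- so the only thing to measure your write-up against is Cisinski's own proof. Your skeleton is sensible: replace the large family of projections by the small set $S=\{G\times X_i\to G\}$ indexed over a small generating subcategory $\mathcal{G}$, and then feed $S$ into Theorem~\ref{CisinskiGeneralProperness}. The right-properness half is indeed routine once the reduction is granted: the identification $(G\times X_i)\times_B E\cong (G\times_B E)\times X_i$ is correct and exhibits the pullback of $f$ along $p$ as another projection of the generating shape.

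The problem is that the reduction itself --- the claim that $Z\times X_i\to Z$ lies in $\cW(S)$ for \emph{every} object $Z$ --- is essentially the entire content of the proposition, and neither of your two routes establishes it. The colimit induction fails for exactly the reason you concede: a localizer is closed under pushouts and transfinite compositions (hence under coproducts), but not under coequalizers, and the canonical presentation $Z\cong\colim_{\mathcal{G}/Z}G$ involves coequalizers. The fallback via $S$-local objects is also not available off the shelf: identifying $\cW(S)$ with the class of $\mathbb{R}\Hom(-,W)$-equivalences for $S$-local $W$ is itself a theorem requiring Cisinski's machinery (the quoted Smith theorem localizes an already-constructed combinatorial model structure, whereas $\cW(S)$ is \emph{defined} as a smallest localizer); and even granting that identification, locality against the generators $G$ does not formally yield locality against all $Z$, because $\mathbb{R}\Hom(-,W)$ does not turn the strict colimit $Z=\colim G$ into a homotopy limit. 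The standard repair is the exponential adjunction $\Hom(Z\times X_i,W)\cong\Hom(Z,W^{X_i})$, which transposes locality of $W$ with respect to the projections into the single statement that $W\to W^{X_i}$ is a weak equivalence, and hence gives locality against every $Z$ at once; making this rigorous uses the pushout-product behaviour of monomorphisms in a topos. That is precisely what one cites Cisinski for: his proof does not shrink the class of projections by density, but constructs the model structure directly from a cylinder and a small set of generating anodyne extensions (pushout-products of generating monomorphisms with the $X_i$), from which accessibility and right properness both fall out. As it stands, your argument is an honest reduction of the statement to \cite[Prop.~3.12]{CisinskiTopoi}, not an independent proof of it.
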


\begin{example}
 An example from mathematical practice of this last kind of model structure, obtained by ``contracting'' a family of objects, is the category $\Sets^{\Delta^{op} \times \mathbf{Sm}/S}$ of $\sSets$-valued functors on smooth schemes over a base $S$ where one localizes the local injective model structure on $\sSets^{\mathbf{Sm}/S}$ by the set $$\{ ! \times 1: \mathbb{A}^1 \times \Hom(-, \ X) \to \Hom(-, \ X) | X \in \mathbf{Sm}/S \}$$
\end{example}

%%%%%%%%%%%%%%%%%%%%%%%%%%%%%%%%%%%%%%%%%%%%%%%%%%%%%%%%%%%%%%%%%
%\input main_theorem.tex
\section{Main Theorem and Examples} \label{section_main_thm}

In this section we will define the notion of a logical model category and show how one can interpret $\sPi$- and $\sSigma$-types in such a category.

\begin{definition}\label{DefLogModCat}
 We say that $\C$ is a {\em logical model category} if $\C$ is a model category and in addition the following two conditions hold:
\begin{enumerate}
 \item if $f \colon B \to A$ is a fibration in $\C$, then there exists the right adjoint $\Pi_f$ to the pullback functor $f^*$.
 \item the class of trivial cofibrations is closed under pullback along a fibration.
\end{enumerate}
\end{definition}

Clearly, one has the following corollary which provides a convenient way of checking that a model category is in fact a logical model category.

\begin{corollary}\label{LogicalModelCat} If $\C$ is a model model satisfying the following three conditions:
\begin{enumerate}
 \item \label{ML-int-Pi}if $f \colon B \to A$ is a fibration in $\C$, then there exists the right adjoint $\Pi_f$ to the pullback functor $f^*$.
 \item \label{ML-int-cof} the class of cofibrations is closed under pullback along a fibration.
 \item \label{ML-int-we} $\C$ is right proper.
\end{enumerate}
then $\C$ is a logical model category.
\end{corollary}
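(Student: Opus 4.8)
The plan is to notice that the first condition of Definition \ref{DefLogModCat} is \emph{literally} hypothesis \ref{ML-int-Pi} of the corollary, so there is nothing to prove there; the entire content is to derive the second condition of Definition \ref{DefLogModCat} --- stability of trivial cofibrations under pullback along a fibration --- from the two remaining hypotheses \ref{ML-int-cof} and \ref{ML-int-we}.

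To carry this out I would fix a trivial cofibration $j \colon C \to D$ and a fibration $f \colon X \to D$ and form the pullback square
$$\xymatrix@C=2.5em{P \ar[r] \ar[d]_{f^*j} & C \ar[d]^{j} \\ X \ar[r]_{f} & D}$$
which exists since a model category is finitely complete. The goal is to show that $f^*j$ is again both a cofibration and a weak equivalence. For the cofibration part, $j$ is in particular a cofibration and $f$ is a fibration, so hypothesis \ref{ML-int-cof} applies verbatim and yields that $f^*j$ is a cofibration. For the weak equivalence part I would invoke right properness: by hypothesis \ref{ML-int-we} together with the first formulation in Definition/Proposition \ref{Rezk}, pullbacks of weak equivalences along fibrations are weak equivalences; since $j$ is a weak equivalence and $f$ a fibration, $f^*j$ is a weak equivalence. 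Combining the two facts, $f^*j$ is a trivial cofibration, which is exactly what Definition \ref{DefLogModCat}.2 demands, so $\C$ is a logical model category.

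I do not expect a genuine obstacle here: the statement is a routine unwinding of definitions once one recognizes that ``right proper'' is precisely the stability of weak equivalences under pullback along fibrations (Definition/Proposition \ref{Rezk}). The only point requiring a little care is that the two halves of the notion ``trivial cofibration'' must be handled separately --- the cofibration half by \ref{ML-int-cof} and the weak-equivalence half by \ref{ML-int-we} --- and that these hypotheses are logically independent, so neither may be omitted.
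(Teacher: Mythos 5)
Your proof is correct and is exactly the argument the paper has in mind (the paper simply asserts the corollary as ``clear'' without writing it out): condition 1 of Definition \ref{DefLogModCat} is hypothesis \ref{ML-int-Pi} verbatim, and condition 2 follows by splitting a trivial cofibration into its cofibration part (handled by hypothesis \ref{ML-int-cof}) and its weak-equivalence part (handled by right properness via Definition/Proposition \ref{Rezk}). Nothing to add.
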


%----------------------------old version-------------------------------
% We say that $\C$ is a {\em logical model category} $\C$ if $\C$ is a model category and in addition the following three conditions hold:
% \begin{enumerate}
%  \item \label{ML-int-choice} $\C$ has a functorial choice of path objects (see Definition \ref{der:path-object}) in $\C$ and all of its slices which is stable under pullback i.e. let $f \colon B \to A$ be a fibration and $g \colon X \to A$ any map in $\C$, then \begin{equation}\label{eq:mod-cat-pull-stab}f^*(B^I) \cong (f^*B)^I.\end{equation}
%  \item \label{ML-int-Pi}if $f \colon B \to A$ is a fibration in $\C$, then there exists the right adjoint $\Pi_f$ to the pullback functor $f^*$.
%  \item \label{ML-int-cof:we} classes of cofibrations and weak equivalences are closed under pullback.
% \end{enumerate}

Given a logical model category $\C$ one can informally describe the interpretation of the syntax of type theory with $\sPi$- and $\sSigma$-types as follows (it can be made formal using Pitts'es formalism of {\em type categories} as described in \cite{pitts:catlog}):

\begin{itemize}
 \item contexts are interpreted as fibrant objects. In particular the empty context is interpreted as a terminal object in $\C$.
 \item a judgement $\Gamma \types A\ \type$ is interpreted as a fibration $\lscott \Gamma,\ x : A \rscott \to \lscott \Gamma \rscott$.
 \item a judgement $\Gamma \types a : A$ is interpreted as a section of $\lscott \Gamma,\ x : A \rscott \to \lscott \Gamma \rscott$ i.e.
 $$\xymatrix@C=2.5em{
        \lscott \Gamma \rscott \ar[rr]^{\lscott a \rscott} \ar@{=}[rd] & & \lscott \Gamma,\ x \colon A \rscott \ar[ld]^{\lscott\Gamma \types A\ \type\rscott} \\
        & \lscott \Gamma \rscott &
        }$$
 \item substitution along $f \colon \Gamma \to \Delta$ is interpreted by means of the pullback functor $f^*$.
\end{itemize}

% \begin{remark}
%  The key point of this interpretation (i.e.~homotopy theoretic) is that we allow only fibrant objects and fibrations as interpretations of contexts and
% \end{remark}

One can observe that the notion of model for type theory, which is a logical model category, is too strong. In fact, we do not need the whole model structure but only one weak factorization system. However, all the examples we have in mind are already model categories so introducing the notion of model in this way is not really a restriction from this point of view.

\begin{theorem}\label{main-theorem}
 If $\C$ is a logical model category, then the above interpretation is sound.
\end{theorem}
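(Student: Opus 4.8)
The plan is to present the interpretation as a \emph{type category} in the sense of Pitts \cite{pitts:catlog}, built from the weak factorization system $(\cC\cap\cW,\F)$ of $\C$, and then to prove soundness by a routine induction over derivations. A context is interpreted as a fibrant object, a type judgement $\Gamma,\ x\oftype A\types B(x)\ \type$ as a fibration $\lscott\Gamma,\ x\oftype A,\ y\oftype B\rscott\to\lscott\Gamma,\ x\oftype A\rscott$ (whose domain is automatically fibrant, being a composite of fibrations down to $\one$), a term judgement as a section of the corresponding fibration, and substitution along $f$ by the pullback functor $f^{*}$. The one place where this data fails to be \emph{strictly} a type category is the functoriality of substitution: chosen pullbacks compose only up to canonical isomorphism. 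I would dispatch this exactly as in the standard coherence theorems for type-theoretic semantics (e.g.\ Hofmann's replacement of a comprehension structure by an equivalent strict one), a step that is logically independent of the model-categorical input; I would only indicate it.

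With the bookkeeping in place, the structural rules are validated for free from three properties of any model category: finite limits exist, the terminal object is fibrant, and fibrations --- being the right class $(\cC\cap\cW)^{\pitchfork}$ of a weak factorization system --- are closed under composition and under pullback. The $\sSigma$-types are almost as cheap: interpret $\sSigma_{x\oftype A}B(x)$ by the composite fibration $\lscott\Gamma,\ x\oftype A,\ y\oftype B\rscott\to\lscott\Gamma,\ x\oftype A\rscott\to\lscott\Gamma\rscott$ and identify $\lscott\Gamma,\ z\oftype\sSigma_{x\oftype A}B(x)\rscott$ with $\lscott\Gamma,\ x\oftype A,\ y\oftype B\rscott$; under this identification $\textsf{pair}$ is interpreted by the identity map, $\textsf{split}_{d}$ by transporting the section interpreting $d$, and the $\sSigma$-\comp\ rule holds on the nose.

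The substantive clause is $\sPi$. I would interpret $\sPi_{x\oftype A}B(x)$ by $\Pi_{p}(q)$, where $p\colon\lscott\Gamma,\ x\oftype A\rscott\to\lscott\Gamma\rscott$ is the projection fibration --- so that $\Pi_{p}$ exists by clause (1) of Definition~\ref{DefLogModCat} --- and $q$ is the fibration interpreting $B$, viewed as an object of the slice model category $\C/\lscott\Gamma,\ x\oftype A\rscott$ of Proposition~\ref{SliceModelCats}. The crucial lemma is that $\Pi_{p}$ sends fibrations to fibrations, so that $\Pi_{p}(q)$ really is a type over $\lscott\Gamma\rscott$ (again with fibrant domain). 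I would prove it by the adjunction-lifting argument: a map in a slice is a fibration iff it has the right lifting property against every trivial cofibration there, and by $p^{*}\dashv\Pi_{p}$ a lifting problem of a trivial cofibration $j$ against $\Pi_{p}(g)$ transposes to one of $p^{*}(j)$ against $g$; but $p^{*}$ preserves trivial cofibrations --- each $p^{*}(j)$ being a pullback of $j$ along a base change of the fibration $p$, hence a trivial cofibration by clause (2) of Definition~\ref{DefLogModCat} --- so the transposed problem is solvable. Then $\sPi$-\intro\ and $\sPi$-\elim\ are the two directions of the bijection furnished by $p^{*}\dashv\Pi_{p}$ between sections of $\Pi_{p}(q)$ over $\lscott\Gamma\rscott$ and sections of $q$ over $\lscott\Gamma,\ x\oftype A\rscott$ --- followed, in the elimination case, by pullback along the section interpreting $a$ --- and $\sPi$-\comp\ reflects the fact that these two passages are mutually inverse.

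Finally, for the interpretation to be well defined one needs the $\sPi$- and $\sSigma$-structures to be stable under substitution (the Beck--Chevalley conditions). For $\sSigma$ this is pasting of pullbacks. For $\sPi$, given a substitution $\sigma$ to be pulled back against a projection fibration $p$, with base-change fibration $\bar{p}$ and other projection $\bar{\sigma}$, the required isomorphism $\sigma^{*}\Pi_{p}\cong\Pi_{\bar{p}}\,\bar{\sigma}^{*}$ reduces, through the adjunction chain $\Sigma\dashv p^{*}\dashv\Pi_{p}$, to Beck--Chevalley for $\Sigma$, namely $p^{*}\Sigma_{\sigma}\cong\Sigma_{\bar{\sigma}}\,\bar{p}^{*}$, which holds for \emph{every} pullback square because $\Sigma$ is just postcomposition (no fibrancy hypothesis on $\sigma$ or $\bar{\sigma}$ is needed). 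Feeding this into the coherence replacement of the first paragraph produces an honest split model, whence soundness follows by the evident induction. I expect the conceptual crux to be the lemma that $\Pi_{p}$ preserves fibrations --- this is precisely where clause (2) is used --- while most of the actual \emph{labour} will be the standard but lengthy coherence bookkeeping.
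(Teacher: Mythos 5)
Your proof is correct and follows essentially the same route as the paper's: $\sSigma$ is handled by closure of fibrations under composition, and the crux is that $\Pi_{p}$ preserves fibrations, which you establish by transposing lifting problems across $p^{*}\dashv\Pi_{p}$ and invoking clause (2) of Definition~\ref{DefLogModCat} --- exactly the content of the paper's appeal to the Quillen-adjunction criterion via the two-pullback lemma. Your extra care with strictification and the Beck--Chevalley conditions only fills in details the paper delegates to Pitts's type-category formalism and to Seely.
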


\begin{proof}
 The interpretation of $\sPi$- and $\sSigma$-types goes along the lines of \cite{seely:lccc}, that is they are interpreted by means of the right adjoint and the left adjoint to the pullback functor, respectively. The only new thing one has to show is that if $f \colon B \to A$ is a fibration, then $\Pi_f$ and $\Sigma_f$ preserve fibrant objects (to validate the formation rules). So it is enough to show that $\Pi_f$ and $\Sigma_f$ preserve fibrations. In case of $\Sigma_f$ it is clear since the class of fibrations is closed under composition. For $\Pi_f$ we observe that since fibrant objects in the slice $\C/B$ are precisely fibrations in $\C$ whose codomain is $B$ we may use theorem \ref{Quillen-adjunction} (by the two pullback lemma) to reduce the problem to condition 2. from the Definition \ref{DefLogModCat}.
\end{proof}

%%%%%%% General example

There is a big class of examples:

\begin{proposition}
\begin{enumerate}
 \item Any right proper Cisinski model structure admits a good interpretation of $\Pi$- and $\Sigma$-types, i.e. satisfies conditions \ref{ML-int-Pi}--\ref{ML-int-we}.
 \item Any left Bousfield localization of the category of sheaves on a site with the injective model structure is an example of this, provided that either \ref{MVrightProperness}, \ref{CisinskiGeneralProperness} or \ref{CisinskiSpecialProperness} apply.
\end{enumerate}
\end{proposition}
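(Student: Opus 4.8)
The plan is to deduce both statements from Corollary~\ref{LogicalModelCat} (together with Theorem~\ref{main-theorem} for the soundness half of the assertion). For part~1, let $\C$ be a topos carrying a right proper Cisinski model structure, and verify the three hypotheses of Corollary~\ref{LogicalModelCat} in turn. Condition~\ref{ML-int-Pi} holds for free: a topos is locally cartesian closed, so the pullback functor $f^*$ has a right adjoint $\Pi_f$ for \emph{every} morphism $f$, in particular for every fibration. Condition~\ref{ML-int-cof} also holds for free: in a Cisinski model structure the cofibrations are exactly the monomorphisms, and monomorphisms are stable under pullback along arbitrary maps, a fortiori along fibrations. Condition~\ref{ML-int-we} is the standing hypothesis of right properness. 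Hence $\C$ is a logical model category, and Theorem~\ref{main-theorem} supplies the sound interpretation.

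For part~2, I would first note that the category of (simplicial) sheaves on a site is a Grothendieck topos, and that a left Bousfield localization leaves the class of cofibrations unchanged; since the cofibrations of the injective model structure are the monomorphisms, the localized structure again has the monomorphisms as its cofibrations. Thus, exactly as in part~1, conditions~\ref{ML-int-Pi} and~\ref{ML-int-cof} of Corollary~\ref{LogicalModelCat} are automatic, and the only remaining issue is right properness. This is precisely what Theorem~\ref{MVrightProperness}, Theorem~\ref{CisinskiGeneralProperness}, or Proposition~\ref{CisinskiSpecialProperness} delivers under its own hypotheses; for the latter two one also records that the weak equivalences of the localized structure form an accessible localizer, so that the localization is itself a right proper Cisinski model structure and part~1 applies verbatim.

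The verification of conditions~\ref{ML-int-Pi} and~\ref{ML-int-cof} is routine bookkeeping --- recalling only that toposes are locally cartesian closed and that monomorphisms are pullback-stable. The one point that needs genuine care is the accessibility of the localizer in part~2: one must know that the (local) injective model structure on simplicial sheaves is already of the form $\cW(S)$ for a small set $S$ --- morally the descent maps, as in Example~\ref{LocalizationExamples} but with simplicial coefficients --- so that a further Bousfield localization at a small set $H$ yields $\cW(S \cup H)$, again accessible. When one argues instead via Theorem~\ref{MVrightProperness} this can be sidestepped, since that result produces right properness directly for categories of simplicial sheaves and part~1 is then invoked only through the remark that the cofibrations stay the monomorphisms.
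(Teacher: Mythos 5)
Your proposal is correct and follows essentially the same route as the paper: both verify the three hypotheses of Corollary~\ref{LogicalModelCat} by citing local cartesian closure of toposes, pullback-stability of monomorphisms (which remain the cofibrations after localization), and the stated right-properness criteria. The extra remarks you make about accessibility of the localizer are a reasonable point of care but do not change the argument.
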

will
\begin{proof} We proceed by checking the conditions of Corollary \ref{LogicalModelCat}:

Condition \ref{ML-int-Pi}: Toposes are locally cartesian closed.

Condition \ref{ML-int-cof}: The cofibrations of the injective model structure are the monomorphisms and by definition they stay the same after a left Bousfield localisation. The class of monomorphisms is closed under pullback.

Condition \ref{ML-int-we}: Right properness is ensured by hypothesis in the first case and by \ref{MVrightProperness}, \ref{CisinskiGeneralProperness}, \ref{CisinskiSpecialProperness} in the second.
\end{proof}

\begin{proposition}
 If $\C$ is a logical model category, then so is any slice category $\C/X$.
\end{proposition}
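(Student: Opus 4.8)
The plan is to verify the two conditions of Definition~\ref{DefLogModCat} directly for $\C/X$, using Proposition~\ref{SliceModelCats} to know that $\C/X$ is a model category whose fibrations, cofibrations and weak equivalences are created by the forgetful functor $U \colon \C/X \to \C$. The one structural fact I would recall first is the standard identification of iterated slices: for an object $a \colon A \to X$ of $\C/X$ there is a canonical isomorphism of categories $(\C/X)/a \cong \C/A$, under which the forgetful functor $(\C/X)/a \to \C/X$ corresponds to $\Sigma_a \colon \C/A \to \C/X$; moreover, for a morphism $f \colon a \to b$ in $\C/X$ — that is, a map $f \colon A \to B$ in $\C$ with $bf = a$ — the pullback functor $f^* \colon (\C/X)/b \to (\C/X)/a$ corresponds, under these isomorphisms, to the pullback functor $f^* \colon \C/B \to \C/A$ taken in $\C$ itself. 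This holds because $U$ preserves pullbacks (being a right adjoint, or simply because pullbacks are connected limits), so that pullbacks in $\C/X$ are computed as in $\C$.

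For condition~1, let $f \colon a \to b$ be a fibration in $\C/X$. By Proposition~\ref{SliceModelCats} it is a fibration in $\C$, so condition~1 of Definition~\ref{DefLogModCat} for $\C$ gives a right adjoint $\Pi_f$ to $f^* \colon \C/B \to \C/A$. Transporting along the isomorphisms $(\C/X)/b \cong \C/B$ and $(\C/X)/a \cong \C/A$, and using that $f^*$ on the slice-of-slice corresponds to $f^*$ on $\C$, we obtain a right adjoint to $f^* \colon (\C/X)/b \to (\C/X)/a$, which is what condition~1 demands.

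For condition~2, let $f \colon a \to b$ be a fibration and $j$ a trivial cofibration in $\C/X$ with codomain $b$, and form the pullback $f^* j$ in $\C/X$. Since $U$ preserves pullbacks, $U(f^*j)$ is the pullback in $\C$ of the trivial cofibration $Uj$ along the fibration $Uf$, hence a trivial cofibration in $\C$ by condition~2 of Definition~\ref{DefLogModCat} for $\C$; as trivial cofibrations in $\C/X$ are exactly those morphisms sent by $U$ to trivial cofibrations, $f^*j$ is a trivial cofibration in $\C/X$. This completes the verification.

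The only mildly delicate point, and the one I expect to require the most care, is the compatibility of pullback functors with the iterated-slice isomorphism: one must check that the chosen pullbacks in $\C/X$ agree, up to the canonical isomorphism, with those in $\C$, and that this identification is natural enough to transport the adjunction $f^* \dashv \Pi_f$. Everything else is bookkeeping. As an alternative route in the right proper case, one could instead invoke Corollary~\ref{LogicalModelCat} together with Proposition~\ref{RezkPropernessCriteria}.2 (slices of right proper model categories are right proper) and the pullback-stability of monomorphisms in slices; but since the definition of a logical model category does not presuppose right properness, the direct argument above is the one I would give.
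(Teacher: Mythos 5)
Your proof is correct and follows essentially the same route as the paper's: both verify the two conditions of Definition~\ref{DefLogModCat} directly, using that the model structure on $\C/X$ is created by the forgetful functor, that pullbacks in $\C/X$ are pullbacks in $\C$, and that the iterated-slice isomorphism $(\C/X)/a \cong \C/A$ transports the right adjoint $\Pi_f$. Your version simply spells out the compatibility of the pullback functors with that isomorphism in more detail than the paper does.
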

\begin{proof}
 By definition of the model structure in \ref{SliceModelCats}, the cofibrations, fibrations and weak equivalences in $\C/X$ are defined to be those of $\C$. Since pullbacks in a slice category are also pullbacks in the original category, trivial cofibrations are preserved under pullback along fibrations by hypothesis. The right adjoint of pullback along a fibration in $\C/X$ is the one from $\C$, the adjointness property follows from the one for $\C$ using that $(\C/X)/A \cong \C/A$ for every object $A\rightarrow X$ of $\C/X$.
\end{proof}

%%%%%%%%%%%%%%%%%%%%%%%%%%%%%%%%%%%%%%%%%%%%%%%%%%%%%%%%%%%%%%%%%%%%%%%%%%%%%%
%\input examples.tex
We now give some concrete examples of model categories satisfying conditions (\ref{ML-int-Pi})--(\ref{ML-int-we}).

\subsection{Groupoids}

Our first example is the category $\Gpd$ of groupoids. It is well known (see \cite{giraud, conduche}) that the right adjoint $\Pi_f$ to the pullback functor exists in $\Gpd$ (as a subcategory of $\Cat$) if and only if $f$ is a so-called {\em Conduch\'e fibration}. In particular, any fibration or opfibration is a Conduch\'e fibration. Since we are interested only in taking $\Pi_f$ for $f$ being a fibration, the condition (\ref{ML-int-Pi}) is satisfied. Alternatively, as was pointed out by a referee, one can deduce satisfaction of (\ref{ML-int-Pi}) from the fact that we are only interested in the restriction of the pullback functor to fibrations and this restricted pullback functor always has a right adjoint (no matter what we are pulling back along).
Conditions (\ref{ML-int-cof}) and (\ref{ML-int-we}) also hold. It is standard to verify that a pullback of a functor injective on objects (resp. an equivalence of categories) is again injective on objects (an equivalence).

The model obtained above is known as the Hofmann-Streicher {\em groupoid model} which was the first intensional model of Martin-L\"of Type Theory (see \cite{hofmann-streicher}).

\subsection{Extreme Examples}

%While homotopically uninteresting, the following are instructive for studying the interaction of type theory and model categories:

{\bf Discrete model structures:} Any bicomplete, locally cartesian closed category endowed with the {\em discrete model structure} ($\cC:=\F:=\mor \, \C$ and $\cW:=\iso \, \C$) satisfies conditions (\ref{ML-int-Pi}), (\ref{ML-int-cof}) and (\ref{ML-int-we}): The right adjoint to the pullback functor exists by hypothesis, the cofibrations are trivially closed under pullbacks and since every object is fibrant right properness is ensured by the first criterion given in \ref{RezkPropernessCriteria}. The interpretation of type theories given in the last section then coincides with the usual extensional one.

{\bf Two indiscrete model structures on a Grothendieck topos:} On any Grothendieck topos (automatically satisfying condition (\ref{ML-int-Pi})) one has two model structures in which the weak equivalences are all morphisms: A Cisinski model structure, and one in which the cofibrations are the {\em complemented monomorphisms} and the fibrations are the split epimorphisms. Right properness of this latter model structure follows from the fact that the first model structure is right proper together with the fact that properness depends only on the weak equivalences (\ref{Rezk}). Stability of cofibrations under pullback is given by the fact that pulling back commutes with taking complements in a topos.

{\bf The minimal Cisinski model structure on $\Sets$:} There is a model structure on $\Sets$, see \cite{CisinskiTopoi} Ex. 3.7, such that cofibrations are monomorphisms and weak equivalences are all morphisms except those whose domain is the empty set and whose codomain is not. The fibrations in this model structure are the epimorphisms and the maps whose domain is the empty set. It is the minimal Cisinski model structure, $\cW := \cW(\emptyset)$ and thus by \ref{CisinskiSpecialProperness}, applied to the empty set of morphisms, is right proper.

\subsection{Localized Structures on (Pre)sheaf Categories}

Taking a category of presheaves on a site (it is locally cartesian closed, ensuring (\ref{ML-int-Pi})) we can endow it with the discrete model structure. By \ref{CombinatorialCatExamples} the discrete model structure on the category of sets is combinatorial, hence by \ref{functorcat} and \ref{functorcatExamples}(1) so is the one on presheaves. It is left and right proper since all objects are fibrant and cofibrant (by \ref{RezkPropernessCriteria}) and hence one can apply left Bousfield localization.

{\bf Localization by Grothendieck topologies:} Examples from mathematical practice are ubiquitous and include the $\Sets$-valued presheaves localized by a Grothendieck topology as in \ref{LocalizationExamples}(3)---e.g. by the Grothendieck topologies for sheaf toposes of topological spaces, for the \'etale, crystalline, or Zariski toposes of schemes, Weil-\'etale toposes or classifying toposes of geometric theories.

{\bf Presheaves on Test categories:} There is a theory of functor categories on test categories that are categories supporting model structures which are Quillen equivalent to the standard model structure on topological spaces, see \cite{CisinskiAsterisque}. The best known examples are cubical sets and simplicial sets. To give an idea about how the further studies of these examples will be pursued we add a description of the $\Pi$-functor for $\sSets$:

Let $p \colon X \to B$ be an object in $\sSets/B$ and $f \colon B \to A$ a fibration of simplicial sets. Moreover let $a \colon \Delta^n \to A$ be a cell inclusion and $f^*(a) =: \overline{a}$. Then $$\Pi_f (X, p)_n = \{ h \colon \Delta^n \times_A B \to X | \ p \circ h = \overline{a}\}.$$

{\bf Models for the $\mathbb{A}^1$-homotopy category:} Let $S$ be some base scheme and $\mathbf{Sm}/S$ the category of smooth schemes over $S$. There are two Quillen equivalent Cisinski models for the $\mathbb{A}^1$-homotopy category, one model structure on the category of $\Sets$-valued Nisnevich sheaves on $\mathbf{Sm}/S$,  and one on the category of $\sSets$-valued Nisnevich sheaves on $\mathbf{Sm}/S$. The latter is obtained by localizing the injective model structure on $\sSets^{\mathbf{Sm}/S}$ by the class $\{ ! \times 1: \mathbb{A}^1 \times \Hom(-, \ X) \to \Hom(-, \ X) | X \in \mathbf{Sm}/S \}$; the former by a similar process; see \cite{VoevodskyICM} for details. We remark that these are presentations of a locally cartesian closed $\infty$-category which is not an $\infty$-topos.

%%%%%%%%%%%%%%%%%%%%%%%%%%%%%%%%%%%%%%%%%%%%%%%%%%%%%%%%%%%%%%%%%%%%%%%%%%%%
%\input future.tex
\section{Future Research} \label{section_future}

In this section we show further directions that are of our interest in \cite{arndt-kapulkin}.

 \paragraph{Semantics in fibration categories.} One can observe that our interpretation of $\sPi$-, $\sSigma$-, and $\Id$-types uses in fact one weak factorization system, not the whole model structure. Since a weak factorization system on its own does not provide a notion of homotopy, one may wish to look for a different notion of semantics. A framework to address such a question can be provided by {\em fibration categories} (see \cite{baues, brown:AHT}). Fibration categories have a well-behaved notion of homotopy, while also seem to be rich enough to admit an interpretation of type theory.

 \paragraph{Further properties of $\sPi$-types.} One may recognize in $\sPi$-$\comp$ the standard $\beta$-rule from $\lambda$-calculus. This rule is by Theorem \ref{main-theorem} satisfied in any logical model category. In \cite{arndt-kapulkin} we will address the question which among other rules that one can associate with $\sPi$-types (such as functional extensionality or $\eta$-rule; see \cite{garner:dep-prod} for more detailed treatment) are satisfied in logical model categories.

%\paragraph{Define $\Pi$-types using $\Pi_f$ and fibrant replacement.} It is clear that the preservation of fibrations by $\Pi_f$ was the main issue in the homotopy theoretic interpretation of type theory. What one may do instead is to apply $\Pi_f$ and then take the fibrant replacement. The operation of fibrant replacement applied to an object provide an object which is fibrant and weakly equivalent to the former. Such interpretation of $\Pi$-types provides further interesting examples such as the category of symmetric spectra and projective model structures on presheaves, which enjoy a universal property.

\paragraph{Extending the class of categories admitting an interpretation of $\Pi$-types.} One may try broaden the scope of our interpretation of $\Pi$-types in two directions. First, our definition of logical model category required the existence of a right adjoint to the pullback functor along a fibration. It seems enough, however, to require a right adjoint to the restriction of this pullback functor  to the full subcategory whose objects are fibrations. Second, we required the right adjoint to take fibrations to fibrations. Alternatetively one may try to force this property using the fibrant replacement functor in a model category. However, in both cases a careful consideration of coherence issues is required, while in the setting of this work the standard methods apply to give coherence.

\appendix
\section{Structural rules of Martin-L\"of Type Theory} \label{appendix_strux_rules}

The structural rules of Martin-L\"of Type Theory are:

\fbox{
\begin{minipage}[t]{1.0\textwidth}
\noindent Variables \hspace{29mm} Substitution \vspace{2mm} \\
\vspace{3mm}
$\inferrule*[right=$\Vble$]{\Gamma \types A\ \type}{\Gamma,\ x \colon A \types x \colon A}$ \hspace{10mm}
$\inferrule*[right=$\Subst$]{\Gamma \types a \colon A \\ \Gamma,\ x \colon A,\ \Delta \types \mathcal{I}}{\Gamma,\ \Delta[a/x] \types \mathcal{I}[a/x]}$ \\
\noindent Weakening \hspace{27mm} Exchange \vspace{2mm} \\ \vspace{3mm}
$\inferrule*[right=$\Weak$]{\Gamma \types A\ \type \\ \Gamma \types \mathcal{I}}{\Gamma,\ x \colon A \types \mathcal{I}}$
%\item[]
$\ \ \inferrule*[right=$\Exch$]{\Gamma,\ x \colon A,\ y \colon B,\ \Delta \types \mathcal{I}}{\Gamma,\ y \colon B,\ x \colon A,\ \Delta \types \mathcal{I}}$ if $x$ is not free in $B$.

\noindent Definitional (Syntactic) Equality
$$\inferrule*{\Gamma \types A\ \type}{\Gamma \types A = A\ \type} \;\;\;\;\; \inferrule*{\Gamma \types A=B\ \type}{\Gamma \types B = A\ \type} \;\;\;\;\; \inferrule*{\Gamma \types A=B\ \type \\ \Gamma \types B=C \ \type}{\Gamma \types A = C\ \type}$$
$$\inferrule*{\Gamma \types a \colon A}{\Gamma \types a = a \colon A} \;\;\;\;\; \inferrule*{\Gamma \types a=b \colon A}{\Gamma \types b=a \colon A} \;\;\;\;\; \inferrule*{\Gamma \types a=b \colon A \\ \Gamma \types b=c \colon A}{\Gamma \types a=c \colon A}$$

\end{minipage}}
% \begin{itemize}
% \item[] Variables \hspace{33mm} Substitution  \\
% \vspace{3mm}
% $\inferrule*[right=$\Vble$]{\Gamma \types A\ \type}{\Gamma,\ x \colon A \types x \colon A}$ \hspace{10mm}
% $\inferrule*[right=$\Subst$]{\Gamma \types a \colon A \\ \Gamma,\ x \colon A,\ \Delta \types \mathcal{I}}{\Gamma,\ \Delta[a/x] \types \mathcal{I}[a/x]}$
% \item[] Weakening \hspace{33mm} Exchange \\ \vspace{3mm}
% $\inferrule*[right=$\Weak$]{\Gamma \types A\ \type \\ \Gamma \types \mathcal{I}}{\Gamma,\ x \colon A \types \mathcal{I}}$
% %\item[]
% $\ \ \inferrule*[right=$\Exch$]{\Gamma,\ x \colon A,\ y \colon B,\ \Delta \types \mathcal{I}}{\Gamma,\ y \colon B,\ x \colon A,\ \Delta \types \mathcal{I}}$ if $x$ is not free in $B$.
% \item[] Equality
% $$\inferrule*{\Gamma \types A\ \type}{\Gamma \types A = A\ \type} \;\;\;\;\; \inferrule*{\Gamma \types A=B\ \type}{\Gamma \types B = A\ \type} \;\;\;\;\; \inferrule*{\Gamma \types A=B\ \type \\ \Gamma \types B=C \ \type}{\Gamma \types A = C\ \type}$$
% $$\inferrule*{\Gamma \types a \colon A}{\Gamma \types a = a \colon A} \;\;\;\;\; \inferrule*{\Gamma \types a=b \colon A}{\Gamma \types b=a \colon A} \;\;\;\;\; \inferrule*{\Gamma \types a=b \colon A \\ \Gamma \types b=c \colon A}{\Gamma \types a=c \colon A}$$
% \end{itemize}

\bibliographystyle{amsalpha}
\bibliography{bibliography}

\end{document}